\newtheorem{theorem}{Theorem}[section]
\newtheorem{lemma}[theorem]{Lemma}
\newtheorem{corollary}[theorem]{Corollary}
\newtheorem{proposition}[theorem]{Proposition}
\theoremstyle{definition}
\newtheorem{definition}[theorem]{Definition}
\newcommand{\restrict}{\,{\mathbin{\vert\mkern-0.3mu\grave{}}}\,}
\newcommand{\remove}[1]{}
\DeclareMathOperator{\McNn}{\mathscr M_{\it n}}
\DeclareMathOperator{\conv}{\rm conv}
\DeclareMathOperator{\den}{\rm den}
\DeclareMathOperator{\Zed}{\mathbb{Z}}
\DeclareMathOperator{\cube}{[0,1]^{\it n}}
\DeclareMathOperator{\maxspec}{\rm MaxSpec}
\DeclareMathOperator{\apo}{\rm apo}
\DeclareMathOperator{\apor}{\rm apo_{\mathbb R}}
\DeclareMathOperator{\sgr}{\rm sgr}
\DeclareMathOperator{\grp}{\rm grp}
\title[Finitely presented unital $\ell$-groups]{\bf Finitely
presented  lattice-ordered abelian groups with order-unit}
\author[L.Cabrer]{Leonardo Cabrer $^\ddag$}
\address[L.Cabrer]{Research Center for Integrated Sciences \\
Japan Advanced Institute of Sciences an Technology \\
1-1 Asahidai -- Nomi -- Ishikawa \\
Japan }
\email{lmcabrer@yahoo.com.ar }
\author[D.Mundici]{Daniele Mundici$^\dag$}
\address[D.Mundici]{Dipartimento di
Matematica \, ``Ulisse Dini'' \\
Universit\`{a} degli Studi di Firenze \\
viale Morgagni 67/A \\
I-50134 Firenze \\
Italy}
\email{mundici@math.unifi.it }
\keywords{Lattice-ordered abelian group, order-unit,
spectral space, basis,   Schauder basis,
dimension group,
Elliott classification, simplicial group,
finite presentation, projective,
rational polyhedron, simplicial complex, unimodular
triangulation, piecewise-linear function,
regular fan, nonsingular fan,
retraction.}
\subjclass[2000]{Primary: 06F20.
Secondary:    08B30,   14M25,
 20F60,     52B20.}
\date{\today}
\begin{document}

\begin{abstract}
Let $G$ be an $\ell$-group (which is short for
``lattice-ordered abelian group'').  Baker and Beynon proved that $G$
is finitely presented iff it is finitely generated and projective.  In
the category $\mathcal U$ of {\it unital} $\ell$-groups---those
$\ell$-groups having a distinguished order-unit $u$---only the
$(\Leftarrow)$-direction holds in general.  Morphisms in $\mathcal U$
are {\it unital $\ell$-homomorphisms,} i.e., hom\-o\-mor\-phisms that
preserve the order-unit and the lattice structure.  We show that a
unital $\ell$-group $(G,u)$ is finitely presented iff it has a basis,
i.e., $G$ is generated by an abstract Schauder basis over its maximal
spectral space.  Thus every finitely generated projective unital
$\ell$-group has a basis $\mathcal B$.  As a partial converse, a large
class of projectives is constructed from bases satisfying
$\bigwedge\mathcal B\not=0$.  Without using the Effros-Handelman-Shen
theorem, we finally show that
the bases of any finitely presented unital $\ell$-group
$(G,u)$  provide a direct system of simplicial groups with
1-1 positive unital homomorphisms, whose limit is $(G,u)$.
\end{abstract}

\maketitle

\section{Introduction}
We refer to \cite{bigkeiwol},   \cite{gla}
 and \cite{goo2}
 for background on
$\ell$-groups.
 A unital $\ell$-group $(G,u)$ is an abelian group $G$ equipped with a
translation-invariant lattice-order and a distinguished
{\it order-unit}
$u$, i.e., an element whose positive integer multiples eventually
dominate each element of $G$. Unital $\ell$-groups are a
modern mathematization of the time-honored euclidean
magnitudes with an archimedean
unit (see \cite{mar}).
By \cite[Theorem 3.9]{mun86},
the  category $\mathcal U$ of unital $\ell$-groups
is  equivalent to the equational class of MV-algebras.
Thus, while the archimedean property of order-units is not
definable in first-order logic, $\mathcal U$ is endowed with all
typical properties of equational classes: in particular,
$\mathcal U$ has subalgebras, quotients and products, which in general
are not cartesian products, \cite{dvuhol}.

Finitely presented $\ell$-groups  (with or without unit)
are an active topic of current research, because
they have a basic proteiform reality, as computable
algebraic  structures,  rational polyhedra, fans,
finitely axiomatizable
theories in many-valued logic,
and finitely presented AF C$^{*}$
algebras whose Murray-von Neumann order of projections is a lattice.
See \cite{glapoi, mun88, marmun, manmarmun, mun08}.

Morphisms in the category of $\ell$-groups are
lattice-preserving homomorphisms.
Morphisms in the category of unital $\ell$-groups
are also required to preserve  order-units.
A  unital $\ell$-group
$(G,u)$ is {\it projective} if whenever $\psi\colon (A,a)\to(B,b)$ is
a surjective morphism and $\phi\colon (G,u)\to(B,b)$ is a morphism,
there is a morphism $\theta\colon (G,u)\to(A,a)$ such that $\phi= \psi
\circ \theta$.
For $\ell$-groups, Baker \cite{bak} and Beynon
 \cite{bey75}, \cite[Theorem 3.1]{bey77} (also see \cite[Corollary
 5.2.2]{gla}) gave the following characterization: {\it An $\ell$-group
 $G$ is finitely generated projective iff it is finitely presented.}
 For unital $\ell$-groups the $(\Rightarrow)$-direction holds
 (\cite[Proposition 5]{mun08}).  The converse direction fails in
 general.

Schauder bases provide the main tool to prove that
an   $\ell$-group is finitely generated projective
iff it is presented by a word in the pure language of
lattices, without resorting to the group structure,
\cite{manmarmun}.
This strengthens the  characterization by Baker-Beynon
mentioned above,
where lattice-group words are used,
and paves the way to a full understanding of the sharp
differences between measure theory in unital and in
non-unital $\ell$-groups, \cite{mun08}.

For a geometric investigation of
finitely presented unital $\ell$-groups,
in \cite{marmun}  the notion of
{\emph basis} (see Definition \ref{def:basis})
was introduced as a purely algebraic counterpart of
Schauder bases.
Specifically, in  \cite[Theorem 4.5]{marmun}
it is proved that an {\it archimedean}
unital $\ell$-group $(G,u)$ is
 finitely presented
iff it has a basis.  The archimedean condition means
that $G$ is isomorphic to an $\ell$-group of
real-valued functions defined on some set $X$.
In Theorem \ref{theorem:basis}
we will prove that the archimedean assumption
can be dropped, thus obtaining  a characterization
of finitely presented unital $\ell$-groups that does
not mention free objects and their universal property.
 \smallskip

As a corollary, every  finitely generated
projective unital $\ell$-group  has a basis.
In Section \ref{sec:proj} we will prove
a partial converse,  yielding a method to
construct  large classes of projective unital $\ell$-groups.

\smallskip

With reference to
\cite{fuc-pisa} and \cite{goo1},  the
underlying dimension group of $(G,u)$ will be considered
in the final section.
In  Theorem \ref{theorem:system} it is proved that
 if $(G,u)$ has a basis then
its bases   provide a direct system of
simplicial groups with 1-1 positive unital homomorphisms,
whose limit is $(G,u)$. Thus the  Effros-Handelman-Shen
representation theorem \cite{effhanshe},
Grillet's theorem \cite[2.1]{gri},   and Marra's theorem
\cite{mar} have a very simple  proof for any
such $(G,u)$.

\section{Preliminaries}
\subsection{Unital $\ell$-groups and bases}

 A {\it lattice-ordered abelian group} ({\it $\ell$-group}) is a structure
$(G,+,-,0,\vee,\wedge)$ such that $(G,+,-,0)$ is an abelian group,
$(G,\vee,\wedge)$ is a lattice, and $x+(y\vee z)=(x+y)\vee(x+z)$ for
all $x,y,z\in G$.  An {\it order-unit} in $G$ is an element $u\in G$ with the property that for
every $g\in G$ there is $ n\in\{1,2,3,\ldots\}$ such that $g\leq nu.$ A
{\it unital $\ell$-group} $(G,u)$ is an $\ell$-group $G$ with a
distinguished order-unit $u$.

A map $h\colon (G,u)\rightarrow (G',u')$ is said
to be a {\it unital $\ell$-homomorphism} if it
preserves the lattice as well as the group structure,
and $h(u)=u'$.
By an {\it ideal} $\mathfrak i$ of a unital
$\ell$-group $(G,u)$ we mean the kernel of a
unital $\ell$-homomorphism of $(G,u)$.
 We denote by $\maxspec(G,u)$ the set of maximal
 ideals of $(G,u)$ equipped with the {\it spectral} topology, \cite[\S
10]{bigkeiwol}: a basis of closed sets for
$\maxspec(G)$ is given by sets of the form $ \{\mathfrak
p \in \maxspec(G) \mid a\in \mathfrak p\}, $ where $a$ ranges over all
elements of $G$. Since $G$ has an order-unit,
$\,\,\,\maxspec(G)$ is a nonempty compact Hausdorff space,
\cite[10.2.2]{bigkeiwol}.

\begin{definition}\label{def:basis}
Let $(G,u)$ be a unital $\ell$-group.  A {\em basis} of $(G,u)$ is a
set $\mathcal B = \{b_{1},\ldots,b_{n}\}$ of nonzero elements of the
positive cone $G^{+}=\{g\in G \mid g\geq 0\}$ such
that
\begin{itemize}
    \item[(i)] $\,\mathcal B$ generates $G$;

    \item[(ii)]
    for each $k=1,2,\ldots$ and $k$-element subset $C$ of $\mathcal B$
with  $0\not= \bigwedge\{b\mid b\in C\}$, the set
$\{\mathfrak{m}\in\maxspec(G)\mid   \mathfrak{m}
\supseteq  \mathcal{B}\setminus C \}$
is homeomorphic to a $(k-1)$-simplex;

\item[(iii)] there are
  integers
$1 \leq m_{1},\ldots,m_{n}$ such that
$\,\sum_{i=1}^{n} \,\, m_{i}b_{i} = u$.
  \end{itemize}

\end{definition}
  This is an equivalent simplified reformulation
of \cite[Definition 4.3]{marmun}.
{}From (ii)-(iii) it follows that
the {\em mul\-ti\-pli\-city}
$\,m_{i}$  of each  $b_{i}\in \mathcal B$
is uniquely determined.

 For $n=1,2,\ldots$ we let ${\mathscr M}_{n}$ denote the
unital $\ell$-group of all continuous functions $f\colon [0,1]^{n}\to
\mathbb R$ having the following property: there are (affine) linear
polynomials $p_{1},\ldots,p_{m}$ with integer coefficients, such that
for all $x\in [0,1]^{n}$ there is $i\in \{1,\ldots,m\}$ with
$f(x)=p_{i}(x)$.  ${\mathscr M}_{n}$ is equipped with the pointwise
operations $+,-,\wedge,\vee$ of $\mathbb R$, and with the constant
function 1 as the distinguished order-unit.
The characteristic universal property of
${\mathscr M}_{n}$ is as follows:

\begin{proposition}
\label{proposition:free}
{\rm (\rm \cite[4.16]{mun86})}  ${\mathscr
M}_{n}$ is generated by the coordinate maps $\pi_ {i} \colon
[0,1]^{n}\to \mathbb R$ together with the order-unit $1$.
For every unital $\ell$-group $(G,u)$ and elements
$g_{1},\ldots,g_{n}$ in the unit interval $[0,u]$ of $G$, if
$g_{1},\ldots,g_{n}$ and $u$ generate $G$,
then there is a unique unital
$\ell$-homomorphism $\psi$ of ${\mathscr M}_{n}$ onto $G$ such that
$\psi(\pi_ {i})=g_{i}$ for each $i=1,\ldots,n.$
\end{proposition}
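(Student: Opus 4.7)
The plan is to identify $\mathscr M_{n}$ with the free object in the appropriate subcategory of $\mathcal U$ via the Mundici equivalence between unital $\ell$-groups and MV-algebras cited as \cite[Theorem 3.9]{mun86}. I would first establish the generation claim by invoking the McNaughton representation theorem: every $f\in\mathscr M_{n}$ admits a normal form $f=\bigvee_{j}\bigwedge_{k} p_{jk}$ where each $p_{jk}(x)=c_{jk}+\sum_{i}a_{jk,i}\,x_{i}$ has integer coefficients. Each such $p_{jk}$ is manifestly a $\mathbb Z$-linear combination of $\pi_{1},\ldots,\pi_{n}$ and the constant $1$, and the unital $\ell$-subgroup generated by $\pi_{1},\ldots,\pi_{n},1$ is closed under $+,-,\vee,\wedge$, so it contains $f$.

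Next I would prove the universal property by passing through the Mundici functor $\Gamma\colon(G,u)\mapsto[0,u]$, equipped with the MV-operations $x\oplus y=(x+y)\wedge u$ and $\neg x=u-x$. The image $\Gamma(\mathscr M_{n},1)=\{f\in\mathscr M_{n}\mid 0\leq f\leq 1\}$ is the MV-algebra of $[0,1]$-valued McNaughton functions on $[0,1]^{n}$, which is the free MV-algebra on the $n$ generators $\pi_{1},\ldots,\pi_{n}$. Given $g_{1},\ldots,g_{n}\in[0,u]=\Gamma(G,u)$, freeness supplies a unique MV-homomorphism $\hat\psi\colon\Gamma(\mathscr M_{n},1)\to\Gamma(G,u)$ sending $\pi_{i}\mapsto g_{i}$; applying the inverse of $\Gamma$ yields the desired unital $\ell$-homomorphism $\psi\colon\mathscr M_{n}\to G$ with $\psi(\pi_{i})=g_{i}$ and $\psi(1)=u$. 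Uniqueness of $\psi$ is automatic from the generation claim, since any two unital $\ell$-homomorphisms agreeing on $\pi_{1},\ldots,\pi_{n},1$ must coincide; surjectivity follows from the hypothesis that $g_{1},\ldots,g_{n},u$ generate $G$, since these elements all lie in the image of $\psi$.

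The hard part is the freeness of $\Gamma(\mathscr M_{n},1)$ on the $n$ generators $\pi_{1},\ldots,\pi_{n}$. This packages together two deep results: McNaughton's theorem, providing the concrete normal form for continuous piecewise-linear functions with integer coefficients over $[0,1]^{n}$, and Chang's completeness theorem, which ensures that the standard MV-algebra $[0,1]$ generates the variety of MV-algebras, so that any term identity valid pointwise on $[0,1]^{n}$ forces the corresponding identity in every MV-algebra. Together they guarantee that no unexpected lattice-group relation among the coordinate functions obstructs the consistent extension of the assignment $\pi_{i}\mapsto g_{i}$ to all of $\mathscr M_{n}$. Once this freeness is established, the remainder of the argument is a formal consequence of the categorical equivalence.
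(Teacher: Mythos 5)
Your proposal is correct, and it matches the intended argument: the paper itself gives no proof of this statement, quoting it directly from \cite[4.16]{mun86}, and the route you describe---identifying the unit interval of ${\mathscr M}_{n}$ with the free $n$-generated MV-algebra via the $\Gamma$ equivalence of \cite[Theorem 3.9]{mun86}, with McNaughton's normal form supplying the generation claim and Chang completeness supplying the freeness---is exactly how the cited source establishes it. Your division of labor is also the right one, since passing through MV-algebras (a variety) is what legitimizes the free-object argument that cannot be run directly in the non-equational class of unital $\ell$-groups, and uniqueness plus surjectivity then follow formally from generation, as you say.
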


We say that
$(G,u)$ is {\it finitely
presented} if for some $n=1,2,\ldots,$
$(G,u)$ is isomorphic to the quotient of  ${\mathscr
M}_{n}$ by a finitely generated (= singly generated =
principal) ideal.

Given $f\in\McNn$ we deonte ${\mathcal Z}f= a^{-1}(0)$ the
{\it zeroset} of $f$.  More
 generally, for every ideal $\mathfrak j$ of $\McNn$ we will write
 \begin{equation}
     \label{equation:zeroset}
 {\mathcal Z}\mathfrak j = \bigcap \{{\mathcal Z}g\mid g\in \mathfrak j\}.
\end{equation}
 In the particular case when $\mathfrak j$
 is maximal,  ${\mathcal Z}\mathfrak j$ is a singleton
 (because the functions in $\McNn$ separate points,
 \cite[4.17]{mun86}),  and we write
  \begin{equation}
     \label{equation:dotzeroset}
 {\dot {\mathcal Z}}\mathfrak j =
 \mbox{ the only element of }  {\mathcal Z}\mathfrak j.
\end{equation}

For later use we record here a classical result,
 whose proof follows
 from the Hion-H{\"o}lder theorem \cite[p.45-47]{fuc},
 \cite[2.6]{bigkeiwol}:

 \begin{lemma}
     \label{lemma:hoelder}
     For every unital $\ell$-group
 $(G,u)$ and ideal $\mathfrak m\in \maxspec G$ there is exactly one
 pair $(\iota_{\mathfrak m },R_{\mathfrak m })$ where $R_{\mathfrak m
 }$ is a unital $\ell$-subgroup of $(\mathbb R,1)$, and
 $\iota_{\mathfrak m }$ is a unital $\ell$-isomorphism of the quotient
 $(G,u)/\mathfrak m$ onto $R_{\mathfrak m }$.  Upon identifying
 $(G,u)/\mathfrak m$ with $R_{\mathfrak m }$ every element $g/\mathfrak
 m\in (G,u)/\mathfrak m$ becomes a real number, and we can
 unambiguously write $ g/\mathfrak m \in \mathbb R. $
\end{lemma}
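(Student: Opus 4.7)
The plan is to reduce the statement to the classical Hölder theorem by first showing that the quotient $(G,u)/\mathfrak{m}$ is an archimedean totally ordered abelian group, and then invoking rigidity of order-preserving maps into $\mathbb{R}$ to get uniqueness. I assume the standard fact that maximal ideals of a unital $\ell$-group are precisely those $\mathfrak{m}$ for which $(G,u)/\mathfrak{m}$ admits no proper nonzero ideal.

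First I would verify that $(G,u)/\mathfrak{m}$ is \emph{totally ordered}. For any $g \in G$, the elements $(g^+)/\mathfrak{m}$ and $(g^-)/\mathfrak{m}$ satisfy $(g^+ \wedge g^-)/\mathfrak{m} = 0$, so the ideals they generate have trivial intersection; maximality of $\mathfrak{m}$ then forces one of them to lie in $\mathfrak{m}$, i.e., either $g/\mathfrak{m}\ge 0$ or $g/\mathfrak{m}\le 0$. Next I would verify the \emph{archimedean} property with respect to the induced order-unit $u/\mathfrak{m}$: if $0 < a \in (G,u)/\mathfrak{m}$ had the property that $na < u/\mathfrak{m}$ for all $n\ge 1$, the set of elements dominated in absolute value by every positive integer multiple of $a$ would form a proper nonzero $\ell$-ideal, contradicting maximality of $\mathfrak{m}$.

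With these two facts in hand, the Hion--Hölder theorem produces an injective order-preserving (hence $\ell$-group) homomorphism of $(G,u)/\mathfrak{m}$ into $(\mathbb{R},+)$; normalizing so that the image of the order-unit $u/\mathfrak{m}$ is $1$ yields a unital $\ell$-isomorphism $\iota_{\mathfrak{m}}$ onto its image $R_{\mathfrak{m}} \subseteq \mathbb{R}$, which is automatically a unital $\ell$-subgroup of $(\mathbb{R},1)$. This establishes existence.

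For uniqueness, suppose $\iota_{\mathfrak{m}}'\colon (G,u)/\mathfrak{m}\to R_{\mathfrak{m}}'$ is another such pair. Then $\iota_{\mathfrak{m}}'\circ \iota_{\mathfrak{m}}^{-1}$ is an order-preserving group isomorphism between two subgroups of $\mathbb{R}$ that contain $1$ and send $1$ to $1$. Any order-preserving additive map on a subgroup of $\mathbb{R}$ containing $1$ is multiplication by a positive real (by the standard argument on dyadic-type approximations using the archimedean order), and the normalization at $1$ forces this scalar to be $1$. Hence $\iota_{\mathfrak{m}} = \iota_{\mathfrak{m}}'$ and $R_{\mathfrak{m}} = R_{\mathfrak{m}}'$. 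The main conceptual obstacle is the archimedean step, since it is exactly where the hypothesis that $\mathfrak{m}$ be \emph{maximal} (rather than merely prime) is used; every other step is either formal or a direct quotation of Hion--Hölder.
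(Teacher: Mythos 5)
Your proposal is correct and takes essentially the same route as the paper, which disposes of this lemma by simply citing the Hion--H\"older theorem (Fuchs, pp.~45--47; Bigard--Keimel--Wolfenstein, 2.6): you have just filled in the standard details, namely that maximality of $\mathfrak m$ makes the quotient a simple, hence totally ordered and archimedean, unital $\ell$-group, after which H\"older plus normalization at the unit gives existence, and rigidity of unit-preserving order embeddings into $\mathbb R$ gives uniqueness of the pair $(\iota_{\mathfrak m},R_{\mathfrak m})$. One small wording slip in the archimedean step: the ideal you want is the set of elements dominated in absolute value by \emph{some} positive integer multiple of $a$ (i.e.\ the ideal generated by $a$), which is nonzero and proper precisely because the hypothesis $na< u/\mathfrak m$ for all $n$ keeps $u/\mathfrak m$ out of it; as literally written with ``every'' the set reduces to the order interval $[-a,a]$, which is not an ideal.
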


\begin{corollary}\label{corollary:HolI}
Let $\mathfrak{i}$ be an ideal of $\McNn$ and $\maxspec_{\supseteq \mathfrak i}\McNn$ denote the compact set of
all maximal ideals of $\maxspec \McNn$ containing $\mathfrak i$.
Then the map $ {\dot
{\mathcal Z}} $
of (\ref{equation:dotzeroset})
yields a
homeomorphism of $\maxspec_{\supseteq \mathfrak
i}\McNn$ onto the compact set $ {\mathcal Z}\mathfrak i\subseteq
[0,1]^{n}$.  The inverse of $\dot {\mathcal Z}$ is the map
$x\in {\mathcal Z}\mathfrak i\mapsto \mathfrak
m_{x} = \{f\in \McNn\mid f(x)=0\},$
and we have identical real numbers
\begin{equation}
\label{equation:zetapunto} f/\mathfrak m
= f({\dot {\mathcal Z}}(\mathfrak m)),
\,\,\,\,\forall f\in \McNn,\,\,\,\forall \mathfrak m \in
\maxspec_{\supseteq \mathfrak i}\in \McNn.
\end{equation}
\end{corollary}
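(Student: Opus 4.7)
The plan is to first establish the corollary in the special case $\mathfrak i = \{0\}$---the classical identification of $\maxspec\McNn$ with $[0,1]^n$---and then obtain the general statement by restricting to the closed subspace $\maxspec_{\supseteq\mathfrak i}\McNn$.

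For each $x\in[0,1]^n$, point evaluation $e_x\colon\McNn\to(\mathbb R,1)$ is a unital $\ell$-homomorphism whose image is totally ordered, so its kernel $\mathfrak m_x$ is a maximal ideal. Conversely, given $\mathfrak m\in\maxspec\McNn$, Lemma \ref{lemma:hoelder} supplies a unital $\ell$-homomorphism $h_{\mathfrak m}\colon\McNn\to(\mathbb R,1)$ with kernel $\mathfrak m$. Setting $x = (h_{\mathfrak m}(\pi_1),\ldots,h_{\mathfrak m}(\pi_n))\in[0,1]^n$, both $h_{\mathfrak m}$ and $e_x$ are unital $\ell$-homomorphisms from $\McNn$ into $(\mathbb R,1)$ agreeing on the generators $\pi_1,\ldots,\pi_n$ and on the order-unit; the uniqueness clause of Proposition \ref{proposition:free} forces $h_{\mathfrak m}=e_x$, and hence $\mathfrak m=\mathfrak m_x$. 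Since $\McNn$ separates points of $[0,1]^n$, the correspondence $x\leftrightarrow\mathfrak m_x$ is bijective and $\mathcal Z\mathfrak m_x=\{x\}$, so $\dot{\mathcal Z}(\mathfrak m_x)=x$. The identity (\ref{equation:zetapunto}) is now immediate from $f/\mathfrak m = h_{\mathfrak m}(f) = e_x(f) = f(x)$.

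For the topology, the basic closed sets of $\maxspec\McNn$ are of the form $\{\mathfrak m : g\in\mathfrak m\}$, and under the bijection these correspond to $\{x\in[0,1]^n : g(x)=0\} = \mathcal Z g$, which is closed in $[0,1]^n$ by continuity of $g$. Hence $\dot{\mathcal Z}$ is continuous, and being a continuous bijection between compact Hausdorff spaces it is a homeomorphism. Finally, $\mathfrak m_x\supseteq\mathfrak i$ iff $g(x)=0$ for every $g\in\mathfrak i$, iff $x\in\mathcal Z\mathfrak i$ (by (\ref{equation:zeroset})); so the bijection restricts to a homeomorphism between the closed subspace $\maxspec_{\supseteq\mathfrak i}\McNn$ and the closed set $\mathcal Z\mathfrak i\subseteq[0,1]^n$, as required.

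The main obstacle lies in the $\mathfrak i=\{0\}$ case, specifically in showing that every maximal ideal of $\McNn$ arises from a point evaluation. This is where Lemma \ref{lemma:hoelder} (which provides the embedding of $\McNn/\mathfrak m$ into $\mathbb R$) must be married to the universal property in Proposition \ref{proposition:free} (which identifies that embedding with evaluation at some $x\in[0,1]^n$, by matching values on a generating set). Once this bijection is secured, the homeomorphism claim and the restriction to $\mathcal Z\mathfrak i$ are routine compact-Hausdorff formalities.
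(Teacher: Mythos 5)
Your proposal is correct, but it reaches the crucial bijection by a genuinely different route than the paper. The paper's proof never uses Proposition \ref{proposition:free}: it takes from the preliminaries (the citation attached to (\ref{equation:dotzeroset})) that the zeroset of a maximal ideal of $\McNn$ is a singleton, and then argues purely ideal-theoretically at the level of $\mathfrak i$ itself: for $x\in\mathcal Z\mathfrak i$ the ideal $\mathfrak m_x$ is maximal, contains $\mathfrak i$, and satisfies $\dot{\mathcal Z}\mathfrak m_x=x$; conversely, any maximal $\mathfrak p\supseteq\mathfrak i$ satisfies $\mathfrak p\subseteq\mathfrak m_{\dot{\mathcal Z}(\mathfrak p)}$, so maximality forces $\mathfrak p=\mathfrak m_{\dot{\mathcal Z}(\mathfrak p)}$; the homeomorphism is then read off from the definition of the spectral topology, and (\ref{equation:zetapunto}) from Lemma \ref{lemma:hoelder}. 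You instead re-prove the very fact the paper outsources: by marrying the H\"older representation of Lemma \ref{lemma:hoelder} to the uniqueness clause of Proposition \ref{proposition:free} on the generators $\pi_1,\ldots,\pi_n$, you show that every maximal ideal of $\McNn$ is the kernel of a point evaluation, and only afterwards restrict to $\maxspec_{\supseteq\mathfrak i}\McNn$. Your route is more self-contained (you need only separation of points, not the nonemptiness of $\mathcal Z\mathfrak m$, which is the substantive content of the cited \cite[4.17]{mun86}); the paper's route is shorter precisely because it leans on that citation and never passes through the free-object universal property. Two small points to tighten: the kernel of $e_x$ is maximal not merely because its image is totally ordered, but because a unital $\ell$-subgroup of $(\mathbb R,1)$ is archimedean totally ordered and hence simple; and your closed-set computation directly establishes continuity of the inverse map $x\mapsto\mathfrak m_x$ (preimages of the basic closed sets $\{\mathfrak m\mid g\in\mathfrak m\}$ are the zerosets $\mathcal Z g$) rather than of $\dot{\mathcal Z}$ itself, which is harmless since, as you note, a continuous bijection between compact Hausdorff spaces is automatically a homeomorphism.
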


\begin{proof} As a matter of fact, for each $x\in {\mathcal Z}\mathfrak
i$, $\mathfrak m_{x}$ is a maximal ideal of $\McNn.$
Further,  for each $
f\in \mathfrak i$, from  $f(x)=0$ we get $f\in \mathfrak m_{x},$ whence
$\mathfrak m_{x} \supseteq \mathfrak i$ and $\dot{\mathcal Z}\mathfrak
m_x=x.$ Let $\mathfrak p\in\maxspec_{\supseteq \mathfrak i}\McNn.$
Then $\mathcal Z \mathfrak p\subseteq\mathcal Z\mathfrak i$ and for
every $f\in \mathfrak p$ with $f(\dot{\mathcal Z}\mathfrak p)=0$ we
have $\mathfrak p\subseteq\mathfrak m_{\dot{\mathcal Z}(\mathfrak p)}$
and $\dot{\mathcal Z} \mathfrak p\in\mathcal Z \mathfrak i.$ The
assumed maximality of $\mathfrak p$ is to the effect that $\mathfrak p
=\mathfrak m_{\dot{\mathcal Z}(\mathfrak p)},$ whence $\dot{\mathcal
Z}$ is a one-one map from $\maxspec_{\supseteq \mathfrak i}\McNn$ onto
$\mathcal Z\mathfrak i$.  By definition of spectral topology,
$\dot{\mathcal Z}$ is a homeomorphism.  An application of Lemma
\ref{lemma:hoelder} now
settles the result.
\end{proof}

\begin{corollary}\label{corollary:HolII}
The quotient map $\kappa\colon \McNn\to \McNn/\mathfrak i$ determines
the  homeomorphism $ \mathfrak m\mapsto \mathfrak
m/\mathfrak i $ of $\maxspec_{\supseteq \mathfrak i}\McNn$ onto
$\maxspec \McNn/\mathfrak i$.  The inverse map is given by $
\kappa^{-1}(\mathfrak n)= \{f\in \McNn\mid f/\mathfrak i\in \mathfrak
n\}$ for each  $ \mathfrak n\in \maxspec \McNn/\mathfrak i.  $
\end{corollary}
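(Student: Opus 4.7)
The statement is essentially the classical correspondence between ideals of a quotient and ideals containing the kernel, augmented with a topological comparison. I would organize the proof as follows.

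First, I would verify the bijection at the level of sets. Given $\mathfrak m\in\maxspec_{\supseteq\mathfrak i}\McNn$, the standard isomorphism theorem for $\ell$-groups tells us that $\mathfrak m/\mathfrak i$ is an ideal of $\McNn/\mathfrak i$ and $(\McNn/\mathfrak i)/(\mathfrak m/\mathfrak i)\cong \McNn/\mathfrak m$. Since $\mathfrak m$ is maximal, the right-hand side is simple, so $\mathfrak m/\mathfrak i$ is a maximal ideal of $\McNn/\mathfrak i$. Conversely, for $\mathfrak n\in\maxspec\McNn/\mathfrak i$, the preimage $\kappa^{-1}(\mathfrak n)=\{f\in\McNn\mid f/\mathfrak i\in\mathfrak n\}$ is an ideal containing $\mathfrak i$, and simplicity of $(\McNn/\mathfrak i)/\mathfrak n\cong\McNn/\kappa^{-1}(\mathfrak n)$ forces $\kappa^{-1}(\mathfrak n)$ to be maximal. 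These two assignments are mutually inverse by standard verification.

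Next I would address the topological part. The unital $\ell$-group $\McNn/\mathfrak i$ inherits the order-unit $1/\mathfrak i$, so $\maxspec\McNn/\mathfrak i$ is compact Hausdorff by \cite[10.2.2]{bigkeiwol}. The subspace $\maxspec_{\supseteq\mathfrak i}\McNn=\bigcap_{f\in\mathfrak i}\{\mathfrak m\in\maxspec\McNn\mid f\in\mathfrak m\}$ is an intersection of basic closed sets in the compact Hausdorff space $\maxspec\McNn$, hence also compact Hausdorff. It therefore suffices to show that the map $\phi\colon\mathfrak m\mapsto\mathfrak m/\mathfrak i$ is continuous; bijectivity onto a Hausdorff space will then upgrade it to a homeomorphism.

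For continuity, I would compute preimages of basic closed sets. A basic closed set in $\maxspec\McNn/\mathfrak i$ has the form $V([f])=\{\mathfrak n\in\maxspec\McNn/\mathfrak i\mid[f]\in\mathfrak n\}$ for some $[f]=f+\mathfrak i$. Since $\mathfrak i\subseteq\mathfrak m$, the condition $[f]\in\mathfrak m/\mathfrak i$ is equivalent to $f\in\mathfrak m$, so
\begin{equation*}
\phi^{-1}\bigl(V([f])\bigr)=\{\mathfrak m\in\maxspec_{\supseteq\mathfrak i}\McNn\mid f\in\mathfrak m\},
\end{equation*}
which is a basic closed set of $\maxspec_{\supseteq\mathfrak i}\McNn$. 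Hence $\phi$ is continuous, and the same calculation run in the opposite direction shows $\phi$ is a closed map, yielding the homeomorphism.

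I do not expect any genuine obstacle here: all the nontrivial algebraic content is encoded in the correspondence theorem, which for $\ell$-groups is essentially formal, and the topological verification reduces to tracking basic closed sets through a single equivalence $f\in\mathfrak m\iff[f]\in\mathfrak m/\mathfrak i$. The only point meriting care is the compact Hausdorff status of the quotient, which rests on the fact that $\mathfrak i$ being an ideal in the unital sense ensures $1/\mathfrak i$ remains an order-unit of $\McNn/\mathfrak i$.
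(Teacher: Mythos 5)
Your proof is correct and is essentially the paper's own (deliberately terse) argument: the paper settles the corollary by citing the ideal-correspondence theorem \cite[2.3.8]{bigkeiwol} together with Lemma \ref{lemma:hoelder}, and your set-level bijection via the correspondence theorem plus the observation that a continuous bijection from the compact set $\maxspec_{\supseteq \mathfrak i}\McNn$ onto the Hausdorff space $\maxspec \McNn/\mathfrak i$ is a homeomorphism is exactly the ``routine'' content being delegated. (Minor remarks: your version never actually needs the H\"older lemma, and the closing claim that $\phi$ is a closed map is redundant once compactness of the domain and Hausdorffness of the codomain are in hand.)
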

\begin{proof}
{}The routine proof   follows by combining
  Lemma \ref{lemma:hoelder} with \cite[2.3.8]{bigkeiwol}.
\end{proof}

\subsection{Rational polyhedra and unimodular triangulations}
We will make use of a few elementary notions and techniques of
polyhedral topology. We refer to the first chapters of
\cite{ewa} for background.  By a {\it rational polyhedron} $P$
  in $\mathbb R^{n}$  we
  understand a finite union of simplexes $P=S_{1}\cup\cdots\cup S_{t}$
   in $\mathbb R^{n}$
  such that the coordinates of the vertices of every simplex $ S_{i}$
  are rational numbers.  For every simplicial complex $\Sigma$ the
  point-set union of the simplexes of $\Sigma$ is called the {\it
  support} of $\Sigma$ and is denoted $|\Sigma|$; $\Sigma$ is said to be
  a \emph{triangulation} of $|\Sigma|$.

  For any rational point $v \in
  \mathbb R^{n}$ the least common denominator of the coordinates of $v$
  is called the \emph{denominator} of $v$,  denoted $\den(v)$.
  The integer vector $\tilde v = \den(v)(v,1)\in \mathbb Z^{n+1}$ is
  called the {\it homogeneous correspondent} of $v$.  An $m$-simplex $U
  = \conv(w_{0}, \ldots,w_{m}) \subseteq [0,1]^{n}$ is said to be
  \emph{unimodular} if it is rational and the set of integer vectors
  $\{\tilde{w}_0, \ldots, \tilde{w}_m\}$ can be extended to a basis of
  the free abelian group ${\mathbb Z}^{n+1}$.  A simplicial complex is
  said to be a \emph{unimodular triangulation} (of its support) if all
  its simplexes are unimodular.

  As a remainder of the relevance of unimodular triangulations,
  recall that the homogeneous correspondent
  of a unimodular triangulation is known as a regular (or, nonsingular)
  fan \cite{ewa, oda}.

The following results show the connection among rational
polyhedra zero-sets of McNaughton maps and ideals in $\McNn$.

\begin{proposition}\cite[4.1,5.1]{marmun}
    \label{proposition:triangP}
Letting $P\subseteq \cube$, the following are equivalent:
\begin{itemize}
\item[(i)] $P$ is a rational polyhedron.
\item[(ii)] $P=|\Delta|$ for some unimodular
triangulation $\Delta$.
\item[(iii)] there is unimodular triangulation
$\nabla$ of $\cube$ such that $$P=\bigcup\{S\in
\nabla\colon S\subseteq P\}.$$
\item[(iv)] $P=\mathcal{Z}f$ for some $f\in\McNn$.
\end{itemize}
\end{proposition}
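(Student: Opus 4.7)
The plan is to close the cycle $(ii)\Rightarrow(i)\Rightarrow(iii)\Rightarrow(iv)\Rightarrow(i)$ and observe the immediate $(iii)\Rightarrow(ii)$. Three of these arrows are routine. $(ii)\Rightarrow(i)$ is clear, as every unimodular simplex is rational. $(iii)\Rightarrow(ii)$ follows by setting $\Delta=\{S\in\nabla\colon S\subseteq P\}$, a unimodular subcomplex with $|\Delta|=P$. And $(iv)\Rightarrow(i)$ uses the definition of $\McNn$ directly: the closed linearity regions $D_i=\{x\in\cube\colon f(x)=p_i(x)\}$ are rational polytopes, so $\mathcal Z f=\bigcup_i\bigl(D_i\cap\{p_i=0\}\bigr)$ is a finite union of rational polyhedra and hence a rational polyhedron.

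For $(i)\Rightarrow(iii)$, I would first produce a rational triangulation $\nabla_0$ of $\cube$ in which $P$ is a union of simplices: write $P$ as a finite union of rational simplices and form a common refinement with any rational triangulation of $\cube$, a standard move in polyhedral topology. Then I would upgrade $\nabla_0$ to a unimodular $\nabla$ without disturbing the subdivision of $P$, by iterated stellar subdivisions at Farey mediants $\tilde w_0+\cdots+\tilde w_m$ of the non-unimodular simplexes in the style of De~Concini and Procesi. A carefully chosen integer-valued invariant (for instance, the sum of denominators of vertices, or equivalently the sum of homogeneous simplex volumes) strictly decreases at each subdivision step, forcing termination. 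Crucially, stellar subdivision merely refines an existing simplex, so the property that $P$ is a union of simplexes is preserved throughout. I expect this to be the main technical hurdle.

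For $(iii)\Rightarrow(iv)$, assume $\nabla$ is unimodular and $P$ is a union of simplexes of $\nabla$. By one further round of unimodular stellar subdivisions we may arrange that $\Sigma=\{S\in\nabla\colon S\subseteq P\}$ is a \emph{full} subcomplex of $\nabla$: every simplex of $\nabla$ whose vertices all lie in $P$ belongs to $\Sigma$. For each $v\in\ver(\nabla)$ let $h_v$ be the Schauder hat at $v$, the unique continuous piecewise-linear function on $\cube$ taking the value $1/\den(v)$ at $v$, vanishing at every other vertex of $\nabla$, and affine on each simplex of $\nabla$; unimodularity of $\nabla$ is exactly what puts $h_v$ inside $\McNn$. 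Define
$$ f=\bigvee\bigl\{h_v\colon v\in\ver(\nabla),\ v\notin P\bigr\}. $$
If $x\in P$, then $x$ belongs to some $T\in\Sigma$, all vertices of $T$ lie in $P$, and hence every $h_v$ with $v\notin P$ vanishes on the whole of $T$, giving $f(x)=0$. If $x\notin P$, then $x$ lies in the relative interior of some $T'\in\nabla\setminus\Sigma$; fullness supplies a vertex $v$ of $T'$ with $v\notin P$, and the interior condition forces $h_v(x)>0$, so $f(x)>0$. Therefore $\mathcal Z f=P$, closing the cycle.
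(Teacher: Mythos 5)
Your easy arrows are fine, but the heart of the proposition is $(i)\Rightarrow(iii)$, and there your desingularization scheme has a genuine gap. Note first that the paper itself gives no argument: Proposition \ref{proposition:triangP} is simply quoted from \cite[4.1, 5.1]{marmun}, so everything rests on your sketch of this step. Starring each non-unimodular simplex at its Farey mediant, with ``sum of denominators'' or ``total homogeneous volume'' as the decreasing invariant, does not work: starring adds a vertex, so the first quantity grows, and the second can grow too, because the multiplicity $|\det(\tilde w_0,\ldots,\tilde w_m)|$ of a simplex is divided only by the integer $\lambda$ with $\tilde w_0+\cdots+\tilde w_m=\lambda\tilde w$ ($w$ the mediant), and nothing forces $\lambda>1$. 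Concretely, take $T=\conv\bigl((0,0),(1,0),(1/3,2/3)\bigr)\subseteq[0,1]^2$: its homogeneous correspondents $(0,0,1),(1,0,1),(1,2,3)$ have determinant $2$, the mediant is $(2/5,2/5)$ with primitive correspondent $(2,2,5)$, and each of the three triangles of the stellar subdivision again has multiplicity $2$; the total multiplicity jumps from $2$ to $6$, the number of non-unimodular simplexes triples, and it is not even clear your procedure terminates. The actual argument in \cite[4.1]{marmun} (the De Concini--Procesi/toric desingularization, cf.\ \cite{ewa}) stars at a primitive lattice vector $\tilde w=\sum_i\lambda_i\tilde w_i$ with $0\le\lambda_i<1$, chosen in a simplex of maximal multiplicity; then each new simplex containing $w$ has multiplicity $\lambda_i\cdot|\det|<|\det|$, and termination follows from a count of simplexes of maximal multiplicity. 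In the example above the correct move is to star the edge $\conv\bigl((1,0),(1/3,2/3)\bigr)$ at $(1/2,1/2)$, which resolves $T$ in one step. As written, your proof of $(i)\Rightarrow(iii)$ does not go through.

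The remaining arrows are essentially right. In $(iv)\Rightarrow(i)$ the sets $D_i$ are finite unions of rational polytopes rather than single polytopes (cut $\cube$ along the rational hyperplanes $p_i=p_j$ and note $f$ agrees with some $p_i$ on each closed cell); this is a harmless fix. In $(iii)\Rightarrow(iv)$ you correctly identified that fullness is indispensable (without it the hat construction fails, e.g.\ when $P$ is the boundary of a $2$-simplex of $\nabla$), but the claim that fullness is achievable ``by one further round of unimodular stellar subdivisions'' is itself left unproved. It is true: star every simplex of $\nabla$ at its Farey mediant in decreasing order of dimension; the mediant's correspondent is a sum of part of a basis of $\Zed^{n+1}$, hence primitive, the relevant determinants are unchanged, so unimodularity survives, and a derived subdivision makes every subcomplex full --- but this needs saying. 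Alternatively you could bypass hats and fullness entirely by proving $(i)\Rightarrow(iv)$ directly: for a rational half-space $\{x\mid a\cdot x\le b\}$ with $a\in\Zed^{n}$, $b\in\Zed$, the function $0\vee(a\cdot x-b)$ lies in $\McNn$ and has that half-space (within $\cube$) as zeroset; taking $\vee$ over the half-spaces cutting out each rational simplex of $P$ and then $\wedge$ over the simplexes gives $f$ with $\mathcal Zf=P$.
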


\begin{lemma}\label{lemma:zeroset}
Let $\mathfrak i$ be an ideal of $\McNn$. Then the following are equivalent:
\begin{itemize}
\item[(i)] $\mathfrak i$ is principal.
\item[(ii)] there exists $f\in\mathfrak i$
such that $\mathcal{Z}\mathfrak{i}=\mathcal{Z}f$.
\end{itemize}
\end{lemma}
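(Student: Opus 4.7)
The plan is to handle the two directions separately. For (i) $\Rightarrow$ (ii), if $\mathfrak i$ is the principal ideal generated by some $h\in \McNn$, then every $g\in \mathfrak i$ satisfies $|g|\le k|h|$ pointwise for some integer $k\ge 1$, which forces $\mathcal Z h\subseteq \mathcal Z g$; intersecting over $g\in \mathfrak i$ gives $\mathcal Z h\subseteq \mathcal Z\mathfrak i$, and the reverse inclusion is automatic since $h\in \mathfrak i$. So $f:=h$ witnesses (ii).

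For (ii) $\Rightarrow$ (i), fix $f\in \mathfrak i$ with $\mathcal Z f=\mathcal Z\mathfrak i$. The inclusion $(f)\subseteq \mathfrak i$ is immediate, so it suffices to show that every $g\in \mathfrak i$ lies in $(f)$, i.e.\ that $|g|\le n|f|$ pointwise for some integer $n\ge 1$. Since $g\in \mathfrak i$ gives $\mathcal Z g\supseteq \mathcal Z\mathfrak i=\mathcal Z f$, the whole statement reduces to the following comparison principle: \emph{for any $f,g\in \McNn$, $\mathcal Z f\subseteq \mathcal Z g$ implies $|g|\le n|f|$ for some $n\in\mathbb N$.}

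To establish this principle I would invoke Proposition~\ref{proposition:triangP} to fix, after a common refinement, a unimodular triangulation $\Delta$ of $[0,1]^n$ such that both $f$ and $g$ are affine linear on each simplex of $\Delta$ and such that $\mathcal Z f$ is the support of a subcomplex of $\Delta$. For each $\sigma\in \Delta$ contained in $\mathcal Z f$, hypothesis gives also $g\equiv 0$ on $\sigma$ and the bound is vacuous there. For each $\sigma\in \Delta$ with $\sigma\not\subseteq \mathcal Z f$, I would prove
\[
 R_\sigma \;:=\; \sup\{\,|g(x)|/|f(x)|\,:\,x\in \sigma,\ f(x)\ne 0\,\} \;<\; \infty,
\]
and then take $n$ to be the ceiling of $\max_{\sigma\in \Delta} R_\sigma$, which is well defined because $\Delta$ is finite.

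The main obstacle is proving $R_\sigma<\infty$. I would argue by compactness and linearity: a violating sequence $x_k\in \sigma$ with $|g(x_k)|/|f(x_k)|\to\infty$ must, since $g$ is bounded, accumulate at some $x^*\in \sigma$ with $f(x^*)=0$, whence also $g(x^*)=0$. Writing $f(x)=\nabla f\cdot(x-x^*)$ and $g(x)=\nabla g\cdot(x-x^*)$ on $\sigma$ and extracting a unit limit direction $v$ of $(x_k-x^*)/|x_k-x^*|$ (which lies in the tangent cone of $\sigma$ at $x^*$), the blow-up would require $\nabla f\cdot v=0$ yet $\nabla g\cdot v\ne 0$. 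But for all sufficiently small $t>0$ the segment $x^*+tv$ lies in $\sigma$ and, by $\nabla f\cdot v=0$, in $\mathcal Z f\subseteq \mathcal Z g$, forcing $\nabla g\cdot v=0$; contradiction. (Alternatively, the piecewise-linear step can be short-circuited by citing W\'ojcicki's lemma from MV-algebra theory, but the above argument keeps the proof self-contained within the framework already set up in the paper.)
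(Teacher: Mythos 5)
Your reduction is sound and, up to the last step, follows the same route as the paper: direction (i)$\Rightarrow$(ii) is the easy one, and for (ii)$\Rightarrow$(i) everything comes down to the comparison principle that $\mathcal Z f\subseteq\mathcal Z g$ forces $|g|\le n|f|$, checked simplex by simplex on a common triangulation over whose simplexes $f$ and $g$ are affine. The gap is in your proof that $R_\sigma<\infty$. From a blow-up sequence you correctly deduce $f(x^*)=g(x^*)=0$ and, writing the ratio as $|\nabla g\cdot u_k|/|\nabla f\cdot u_k|$ with $u_k=(x_k-x^*)/|x_k-x^*|\to v$, that $\nabla f\cdot v=0$. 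But the assertion that the blow-up ``would require $\nabla g\cdot v\ne 0$'' is not justified: a quotient whose numerator and denominator both tend to $0$ can still tend to infinity, namely when $\nabla f\cdot u_k$ vanishes to higher order than $\nabla g\cdot u_k$ as $u_k$ approaches $v$ inside the tangent cone. Hence the identity $\nabla g\cdot v=0$, which you obtain from the segment $x^*+tv$, contradicts nothing; ruling out the case $\nabla f\cdot v=\nabla g\cdot v=0$ is exactly the content of the claim, and your one-step limiting argument is silent about it. To finish along these lines you would have to use the polyhedral structure again at the next order, e.g.\ decompose the tangent cone of $\sigma$ at $x^*$ into simplicial subcones on which $f$ and $g$ have constant sign and compare the two functionals on the finitely many generating rays---which is a disguised form of the vertex argument below.

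The paper avoids limits altogether, and you can repair your proof the same way. Replace $f$ and $g$ by $|f|$ and $|g|$ (this changes neither the zerosets nor membership in the principal ideal), take a triangulation $\Lambda$ of $[0,1]^n$ over each of whose simplexes both functions are linear, and compare values only at the finitely many vertices $v_1,\dots,v_s$ of $\Lambda$: if $f(v_i)=0$, then $v_i\in\mathcal Z f\subseteq\mathcal Z g$ gives $g(v_i)=0$, so there is an integer $m_i\ge 1$ with $g(v_i)\le m_i f(v_i)$; putting $m=\max_i m_i$, the function $mf-g$ is affine on each simplex of $\Lambda$ and nonnegative at its vertices, hence nonnegative everywhere. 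This yields $R_\sigma\le m$ for all $\sigma$ at once, with no compactness or limit directions needed.
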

\begin{proof}
For the non trivial direction, let $f\in\mathfrak i$ such that $\mathcal{Z}\mathfrak{i}=\mathcal{Z}f$.
It is no loss of generality to suppose $0\leq f$. We must verify that, for all
$0\leq g \in \McNn,$ $ g\in \mathfrak i\Leftrightarrow g\leq kf
\mbox{ for some } k=1,2,\ldots \,\,.$
 The $\leftarrow$-direction
follows from the fact that $f\in\mathfrak{i}$.  For the
$\rightarrow$-direction, let $\Lambda$, be a rational
triangulation of $\cube$, $f$ and $g$ are linear over each $S\in\Lambda$.
Let $\{v_1,\ldots,v_s\}$ be the vertices of $\Lambda$.
Since $\mathcal{Z}f=\mathcal{Z}\mathfrak{i}\subseteq\mathcal{Z}g$, $f(v_i)= 0$ implies $g(v_i)=0$.
Then there exists an integer $m_{i}>0$ such that $m_{i}f(v_i) \geq
f(v_i)$ for each $i=1,\ldots,s$.  Letting $m=\max(m_{1},\ldots,m_{s})$,
the desired result follows from the linearity of $f$ and $g$ over each
simplex of $\Lambda$.
\end{proof}

\section{Finitely presented unital $\ell$-groups and basis}\label{sec:finpresbasis}

\begin{theorem}
\label{theorem:basis}
A unital $\ell$-group $(G,u)$ is  finitely presented
iff it has a basis.
\end{theorem}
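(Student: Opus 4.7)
The proof splits into the two directions, both exploiting the dictionary between rational polyhedra in $[0,1]^{n}$ and principal ideals of $\McNn$ given by Proposition \ref{proposition:triangP} and Lemma \ref{lemma:zeroset}.

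For $(\Rightarrow)$, assume $(G,u)\cong\McNn/(f)$ and set $P=\mathcal Zf$. By Proposition \ref{proposition:triangP}(iii) there is a unimodular triangulation $\nabla$ of $[0,1]^{n}$ having $P$ as a subcomplex; refining $\nabla$ if necessary, we may also assume that lifts to $\McNn$ of any preassigned finite generating set of $G$ are linear over $\nabla$. For each vertex $v\in\ver(\nabla)\cap P$ let $h_{v}\in\McNn$ be the Schauder hat at $v$ (linear over $\nabla$, with value $1/\den(v)$ at $v$ and $0$ at the other vertices of $\nabla$) and put $b_{v}:=h_{v}/(f)$. Conditions (ii)--(iii) of Definition \ref{def:basis} for $\mathcal B:=\{b_{v}\mid v\in\ver(\nabla)\cap P\}$ follow from the Schauder identity $\sum_{v}\den(v)h_{v}=1$ together with the open-star combinatorics of $\nabla$, transported to $\maxspec G$ through Corollaries \ref{corollary:HolI}--\ref{corollary:HolII}; hats at vertices outside $P$ vanish identically on $P$, hence lie in $(f)$ and drop out of the sum. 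Condition (i) holds because the chosen generators of $G$ lift to the $\ell$-subgroup of $\McNn$ spanned by $\{h_{v}\}_{v\in\ver(\nabla)}$.

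For $(\Leftarrow)$, given a basis $\mathcal B=\{b_{1},\dots,b_{n}\}$ with multiplicities $m_{i}$, Proposition \ref{proposition:free} produces a surjective unital $\ell$-homomorphism $\psi\colon\McNn\to G$ with $\psi(\pi_{i})=b_{i}$. Set $\mathfrak i:=\ker\psi$; we must show $\mathfrak i$ is principal. By Corollary \ref{corollary:HolI} the homeomorphism $\dot{\mathcal Z}$ sends each $\mathfrak m\in\maxspec_{\supseteq\mathfrak i}\McNn$ to $(b_{1}/\mathfrak m,\dots,b_{n}/\mathfrak m)\in[0,1]^{n}$. Condition (iii) of Definition \ref{def:basis} forces the image into the rational hyperplane $\sum m_{i}x_{i}=1$; condition (ii), applied to each $k$-subset $C\subseteq\mathcal B$ with $\bigwedge C\neq 0$, identifies the corresponding $(k-1)$-simplex of $\maxspec G$ with the rational simplex $\conv\{e_{j}/m_{j}\mid b_{j}\in C\}$. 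Thus $\mathcal Z\mathfrak i$ is the underlying set of a rational simplicial complex, and Proposition \ref{proposition:triangP} combined with Lemma \ref{lemma:zeroset} supplies $f\in\McNn$ with $(f)=\{g\in\McNn\mid g|_{\mathcal Z\mathfrak i}=0\}\supseteq\mathfrak i$.

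The main obstacle is the reverse inclusion $(f)\subseteq\mathfrak i$, which is equivalent to $G$ being semisimple -- precisely the archimedean hypothesis that \cite[Theorem 4.5]{marmun} imposes outright, and the new content of Theorem \ref{theorem:basis}. The plan is to use Lemma \ref{lemma:hoelder} and the basis to put every $g\in G$ in a canonical $\ell$-group normal form in the $b_{j}$'s, and then to read off the values $g/\mathfrak m$ over the simplicial pieces of $\maxspec G$ supplied by condition (ii), showing the evaluation map $g\mapsto(\mathfrak m\mapsto g/\mathfrak m)$ is injective into $C(\maxspec G,\mathbb R)$. Once semisimplicity is in hand, $\mathfrak i$ equals its radical $(f)$, and $(G,u)\cong\McNn/(f)$ is finitely presented.
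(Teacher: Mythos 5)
Your $(\Rightarrow)$ half is fine in outline (the paper simply invokes \cite[5.2]{marmun} for it), and your reduction of $(\Leftarrow)$ to showing that $\mathfrak i=\ker\psi$ is principal is the same as the paper's. But the $(\Leftarrow)$ direction as written has two genuine gaps. First, the identification $\mathcal Z\mathfrak i=\bigcup_{C}T_{C}$, where $T_{C}=\conv\{e_{j}/m_{j}\mid b_{j}\in C\}$, is asserted rather than proved: Definition \ref{def:basis}(ii) only provides an \emph{abstract} homeomorphism of $\{\mathfrak m\in\maxspec G\mid \mathfrak m\supseteq\mathcal B\setminus C\}$ with a $(k-1)$-simplex; it says nothing about where $\dot{\mathcal Z}$ places this set inside $[0,1]^{n}$. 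In the paper this is exactly Claim 1 (the inclusion $\apor(C)\subseteq T_{C}$, which uses the multiplicity identity (iii)) together with the substantially harder Claim 2 ($\apor(C)=T_{C}$), proved by induction on $|C|$ plus a contractibility argument: a proper compact subset of $T_{C}$ containing all its facets cannot be contractible, hence cannot be homeomorphic to a simplex. Second, and decisively, the step you yourself call the main obstacle --- the inclusion $(f)\subseteq\mathfrak i$, i.e.\ semisimplicity of $G$ --- is left as a ``plan'' (``canonical normal form in the $b_{j}$'s'', ``evaluation is injective'') with no actual argument. No such normal form for lattice-group words in the generators is available, and this step is precisely the new content of the theorem over \cite[4.5]{marmun}: if semisimplicity followed so directly from the existence of a basis, the archimedean hypothesis there could be removed trivially. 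As it stands, your argument establishes only $\mathfrak i\subseteq(f)$.

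The paper never proves semisimplicity as an input; it circumvents it by constructing a generator \emph{inside} $\mathfrak i$ rather than from the zero set. Condition (iii) gives $a=|1-\sum_{i}m_{i}\pi_{i}|\in\mathfrak i$ with $\mathcal Za=T_{\mathcal B}$; since every non-cluster of $\mathcal B$ has meet $0$, distributivity yields $\bigwedge_{C\in\mathcal B^{\bowtie}}a_{C}\in\mathfrak i$, where $a_{C}=\bigvee\{\pi_{j}\mid b_{j}\in\mathcal B\setminus C\}$; hence $f^{*}=a\vee\bigwedge_{C\in\mathcal B^{\bowtie}}a_{C}$ lies in $\mathfrak i$ and satisfies $\mathcal Zf^{*}=\bigcup_{C}T_{C}$. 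Once Claims 1--2 give $\mathcal Z\mathfrak i=\mathcal Zf^{*}$, Lemma \ref{lemma:zeroset} immediately shows that $\mathfrak i$ is generated by $f^{*}$, and semisimplicity of $G$ falls out as a consequence rather than being assumed. To repair your proof you would either have to carry out an analogous construction of some $f^{*}\in\mathfrak i$ with $\mathcal Zf^{*}=\mathcal Z\mathfrak i$ (together with an honest proof of the polyhedral description of $\mathcal Z\mathfrak i$), or supply a genuine proof of your injectivity claim --- which is, in effect, the theorem itself.
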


The $(\Rightarrow)$-direction of Theorem \ref{theorem:basis} is proved in \cite[5.2]{marmun}.
To prove the $(\Leftarrow)$-direction let $\mathcal{B}=\{b_{1},\ldots,b_{n}\}$ be a
basis of $(G,u)$, with multiplicities $m_{1},\ldots,m_{n}$.  Let
$\kappa \colon \McNn\rightarrow (G,u)$ be the unique unital
$\ell$-homomorphism extending the map $\pi_{i}\mapsto b_{i},$ as given
by Proposition \ref{proposition:free}.  Let the ideal $\mathfrak{i}$
of $\McNn$ be defined by $\mathfrak{i} = \ker(\kappa)$.
By Definition \ref{def:basis}(i),
$\kappa$ is onto $G$, thus
 \begin{equation}
     \label{equation:identification}
     (G,u)\cong \McNn/\mathfrak{i}.
     \end{equation}

For any $E\subseteq \mathcal B$ we define the simplex $ T_{E}\subseteq
  [0,1]^{n}$ by \begin{equation} \label{equation:ti} T_{E}=
  \conv\{e_{i}/m_{i}\mid b_{i}\in E\}, \end{equation} where $e_{i}$ is
  the $i$th standard basis vector of $\mathbb R^{n}$.
  {}From Definition \ref{def:basis}(ii) it follows  that
  $\kappa(\sum_{i} m_{i}\pi_{i})= \sum_{i} m_{i}\kappa(\pi_{i})= \sum_{i}
  m_{i}b_{i} =u$ whence,
  defining the function $a\in \McNn$ by
  $a=|1-\sum_{i} m_{i}\pi_{i}|$,

  \begin{equation}
 \label{equation:cl1}
 0\leq a \in \mathfrak i,  \mbox{ and }  {\mathcal Z}a=T_{\mathcal B}.
 \end{equation}

 \medskip
 \noindent
 Let $k=1,2,\ldots\,\,.$
 Then by a $k$-{\em cluster} of $\mathcal B$ we
 understand a $k$-element subset $C$
 of $\mathcal B$ such that
 $\,\, \bigwedge C \not=0$.  We denote by ${\mathcal B}^{\bowtie}$ the
 set of all clusters of $\mathcal B.$
For each $C\in \mathcal B^{\bowtie}$, displaying the
complementary set $\mathcal B\setminus C$  as
$\{b_{j_{1}},\ldots,b_{j_{s}}\}$, we
define the function $a_{C}\in \McNn$ by

  \begin{equation}
 \label{equation:duecroci}
a_{C}=\pi_{j_{1}}\vee\ldots\vee \pi_{j_{s}},
\,\,\,\mbox{($a_{C}=0$ in case $C=\mathfrak B$).}
 \end{equation}
We then  have
  \begin{equation}
 \label{equation:zeroaci}
T_{\mathcal B}\cap \mathcal Za_{C}=T_{C}.
 \end{equation}

\bigskip
\noindent
We next observe
 \begin{equation}
 \label{equation:cl2}
\bigwedge_{C\in \mathcal B^{\bowtie}} a_{C} \in \mathfrak i.
 \end{equation}
By (\ref{equation:duecroci}), the result is trivial if $\mathcal B$ is
a cluster in $\mathcal B^{\bowtie}$.  If this is not the case, let
$b_{i_C}\in\mathcal{B}\setminus C$ for each $C\in\mathcal
B^{\bowtie}$.  If   $D=\{b_{i_C}\colon C\in \mathcal
B^{\bowtie}\}\in\mathcal B^{\bowtie}$,  then $b_{i_D}\in D$, which is a
contradiction.  Therefore, $$\kappa(\bigwedge_{C\in \mathcal
B^{\bowtie}} \pi_{i_{C}})=\bigwedge_{C\in \mathcal B^{\bowtie}}
b_{i_{C}}=0,$$ i.e., $\bigwedge_{C\in \mathcal B^{\bowtie}}
\pi_{i_{C}}\in \mathfrak i$.  Since  each
$b_{i_C}\in\mathcal{B}\setminus C$ is arbitrary, (\ref{equation:cl2})
now follows from the distributivity of the underlying lattice of
$(G,u)$.

\bigskip
\noindent
Let the function $f^{*}\in \McNn$ be defined by
    \begin{equation}
    \label{equation:effestar}
    f^{*} = a
\vee \bigwedge_{C\in \mathcal B^{\bowtie}} a_{C}.
\end{equation}
{}From  (\ref{equation:cl1}) and (\ref{equation:cl2})
it follows that
\begin{equation}
     \label{equation:star-belongs}
    0\leq  f^{*}  \in \mathfrak i,
    \end{equation}
and from (\ref{equation:zeroaci}),
   \begin{equation}
       \label{equation:crossedsquare}
       {\mathcal Z}f^{*}= {\mathcal Z}a\,\,\,
       \cap \bigcup_{C\in \mathcal B^{\bowtie}}
       {\mathcal Z}a_{C} =
       \bigcup_{C\in \mathcal B^{\bowtie}}T_{C}.
       \end{equation}
{}From (\ref{equation:star-belongs})
we immediately have
   \begin{equation}
       \label{equation:dieci}
       {\mathcal Z}f^{*}  \supseteq {\mathcal Z} \mathfrak i.
       \end{equation}

\bigskip
To prove the converse inclusion,
for each cluster  $K$ of $\mathcal B$
we set
\begin{equation}\label{eq:defapo}
\apo (K)= \{\mathfrak n\in \maxspec \McNn/\mathfrak
i\mid   \mathfrak n
\supseteq \mathcal B\setminus K\}.
\end{equation}
For each $\mathfrak n\in \maxspec \McNn/\mathfrak
i$, letting $C_{\mathfrak n}$ be the cluster of all $b\in \mathcal B$
such that $b\notin \mathfrak n$, it follows that $\mathcal B\setminus
C_{\mathfrak n}\subseteq \mathfrak n$, whence $\mathfrak n\in \apo
(C_{\mathfrak n}).$ Thus, $\bigcup_{C\in \mathcal B^{\bowtie}} \apo
(C) \supseteq \maxspec \McNn/\mathfrak i$.  Since the converse
inclusion holds by definition, we have
 \begin{equation}
\label{equation:cross2squares}
\maxspec \McNn/\mathfrak i=\bigcup_{C\in \mathcal B^{\bowtie}}
\apo (C).
\end{equation}
For each $K\in\mathcal{B}^{\bowtie}$, we denote by $\apor (K)$ the
inverse image of $\apo (K)$ under the composition of the
homeomorphisms $x\mapsto \mathfrak m_{x}\mapsto \mathfrak
m_{x}/\mathfrak i $ of Corollaries \ref{corollary:HolI} and
\ref{corollary:HolII}, where $m_{x} = \{f\in \McNn\mid f(x)=0\}$.  In
other words, \begin{equation} \label{equation:apor} \apor (K)=\{x\in
{\mathcal Z}\mathfrak i\mid {\mathfrak m_{x}}/{\mathfrak i} \in \apo
(K)\}.  \end{equation} {}From
(\ref{equation:crossedsquare})-(\ref{equation:cross2squares}) we get
\begin{equation} \label{equation:liftata} \bigcup_{C\in \mathcal
B^{\bowtie}} \apor (C) = {\mathcal Z}\mathfrak i \subseteq \mathcal
Zf^{*} = \bigcup_{C\in \mathcal B^{\bowtie}}T_{C}.  \end{equation}
This inclusion can be refined as follows:

\medskip

\noindent {\it Claim 1:} For each cluster $C$
of $\mathcal B$, $\apor (C)\subseteq T_{C}.$

\medskip
As a matter of fact,
by (\ref{eq:defapo}) and condition (iii) of Definition \ref{def:basis} we have
\begin{eqnarray}
\label{equation:apo}
 \nonumber  \apo (C) &=& \{\mathfrak n\in \maxspec \McNn/\mathfrak
i\mid b/\mathfrak n=0,\forall b\in\mathcal B\setminus C\} \\
  &=& \{\mathfrak n\in \maxspec \McNn/\mathfrak
i\mid \frac{m_{i_{1}}b_{i_{1}}+\cdots+m_{i_{t}}b_{i_{t}}}{\mathfrak n}=1\},
\end{eqnarray}
for each cluster $C=\{b_{i_{1}},\ldots,b_{i_{t}}\}$ of $\mathcal B$.
By Lemma \ref{lemma:hoelder}, for each
$\mathfrak m\in \maxspec_{\supseteq \mathfrak i}\McNn$ the unital
$\ell$-group $\frac{\McNn}{\mathfrak m}$ and its isomorphic copy
$\frac{\McNn/\mathfrak i}{\mathfrak m/\mathfrak i}$ are canonically
isomorphic to the same unital $\ell$-subgroup of $(\mathbb R,1)$.
Thus for each $f\in \McNn$ we have identical real numbers $
\frac{f/\mathfrak i}{\mathfrak m/ \mathfrak i} =\frac{f}{\mathfrak
m}.$ Thus, by Corollary \ref{corollary:HolI} and Corollary \ref{corollary:HolII}
    	\begin{equation}
	    \label{equation:alpha-beta}
	      \frac{f/\mathfrak i}{\mathfrak n/\mathfrak i}
	      =
	      f({\dot {\mathcal Z}}(\kappa^{-1}(\mathfrak n))),
	      \,\,\,\forall
	      \mathfrak n\in \maxspec \McNn/\mathfrak i,
	    \end{equation}
or equivalently,
\begin{equation}
	    \label{equation:beta-alpha}
	    f(x)=\frac{f/\mathfrak i}{\mathfrak m_{x}/\mathfrak i}
	    ,\,\,\,
	    \forall x\in {\mathcal Z}\mathfrak i.
	    \end{equation}
Combining (\ref{equation:apor}) with (\ref{equation:apo}), we obtain
$(y_{1},\ldots,y_{n})\in \apor (C)$ if and only if
$$\frac{m_{i_{1}}b_{i_{1}}+\cdots+m_{i_{t}}b_{i_{t}}}{\mathfrak{m}_x/\mathfrak{i}}
=\frac{(m_{i_{1}}y_{i_{1}}+
\cdots+m_{i_{t}}y_{i_{t}})/\mathfrak{i}}{\mathfrak{m}_x/\mathfrak{i}}=1.$$
Now recalling (\ref{equation:ti}), by (\ref{equation:beta-alpha}) we
obtain \begin{equation} \label{equation:lorena-apor} \apor
(C)=\{(y_{1},\ldots,y_{n}) \in {\mathcal Z}\mathfrak i\mid
m_{i_{1}}y_{i_{1}}+\cdots+m_{i_{t}}y_{i_{t}}=1\} \subseteq T_{C},
\end{equation} thus settling Claim 1.

\bigskip
Actually, a stronger result holds:
\medskip

\noindent
{\it Claim 2.} For every cluster $C$
of $\mathcal B$,
$\apor (C)= T_{C}.$

\medskip
The proof is by induction on the
number $l$  of elements of $C$.

\medskip \noindent{\it Base case: $l=1$.}
Then for a unique $j\in
\{1,\ldots,n\}$ we have $C=\{b_{j}\}=\{\pi_{j}/\mathfrak i\}$.
Condition (ii) in Definition \ref{def:basis} is to the effect that
$\apo ({C})$ contains exactly one element $\mathfrak n$.  By Lemma
\ref{lemma:hoelder}, $\mathfrak n$ is the only maximal ideal of
$\McNn/\mathfrak i$ such that $ 0 = b/\mathfrak n$ for all $b\not=
b_{j}; $ by (\ref{equation:apo}), $\mathfrak n$ is uniquely determined
by the condition $ 1= {m_{j}b_{j}}/{\mathfrak n}=
({m_{j}\pi_{j}/\mathfrak i})/{\mathfrak n}.$ Letting $z\in {\mathcal
Z}\mathfrak i$ be the image of $\mathfrak n$ in $\apor (C)$, by
(\ref{equation:ti}) and Claim 1 we have
$z=e_{j}/m_{j}.$ We conclude that $\apor (C) = \{e_{j}/m_{j}\}=
\conv\{e_{j}/m_{j}\}= T_{C}.  $

\bigskip \noindent{\it Induction Step, $l+1$.} Let us write
$C=\{b_{i_{0}},\ldots,b_{i_{l}}\}$.  Since every $l$-element subset
$C'$ of $C$ is a cluster of $\mathcal B$, by induction hypothesis
$\apor (C') = T_{C'} .  $ $T_{C'}$ is known as a {\it facet} of
$T_{C}$.  By Claim 1, $\apor (C)$ is a nonempty
subset of $T_{C}$ containing all facets of $T_{C}$.  Further,
$\apor (C)$ is homeomorphic to an $l$-simplex,
because so is its homeomorphic
copy $\apo (C)$, by condition (ii) in Definition \ref{def:basis}.
Observe that $T_{C}$ is {\it contractible}  (i.e.,
$T_{C}$ is continuously shrinkable to a point).
By way of contradiction, suppose
$\apor (C)$ is a proper subset of $T_{C}$. Then a classical result
in algebraic topology shows that
$\apor (C)$ is not contractible.  Thus
$\apor (C)$ is not homeomorphic to any $l$-simplex, a contradiction
showing $\apor (C)=T_{C},$
and settling  Claim 2.

\bigskip
\noindent
{}Combining  Claim 2 and
(\ref{equation:liftata}),
we can write
  \begin{equation}
\label{equation:circles}
{\mathcal Z}f^{*}= \bigcup_{C\in \mathcal B^{\bowtie}}
T_{C}={\mathcal Z}\mathfrak i.
\end{equation}

\medskip

   Recalling Lemma \ref{lemma:zeroset} it follows
   that $\mathfrak i$ is the ideal generated by $f^{*}$.
By (\ref{equation:identification}),
$(G,u)$ is finitely presented.
The proof of
Theorem \ref{theorem:basis}
is thus complete.
\hfill $\Box$

\section{A class of projective unital $\ell$-groups}\label{sec:proj}
In Theorem \ref{theorem:projective} below we will
construct  a large class of
projective unital $\ell$-groups.
For the proof we prepare

\begin{lemma}\label{lemma:LinearMap}
Let  $S=
     \conv(x_{1},\ldots,x_{k})
     \subseteq [0,1]^{n}$ be a
     unimodular $(k-1)$-simplex and $v\in\{0,1\}^n$ a vertex of $\cube$.
     Then for every $Y\subseteq \{x_1,\ldots,x_k\}$ there is a
	matrix  $M\in\Zed^{n\times n}$ and  a
	vector  $b\in\Zed^{n}$  such that
\begin{equation}\label{eq:zedmap}
M x_{i}+b_{i}=\left\{\begin{tabular}{ll}
$v$& if $x_i\in Y$,\\
$x_i$ & otherwise.
\end{tabular}\right.
\end{equation}
  \end{lemma}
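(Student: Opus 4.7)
The plan is to encode the sought affine map in homogeneous coordinates. An affine map $x\mapsto Mx+b$ with $M\in\Zed^{n\times n}$ and $b\in\Zed^{n}$ corresponds bijectively to a $\Zed$-linear endomorphism $T$ of $\Zed^{n+1}$ whose matrix in the standard basis has the block form $\bigl(\begin{smallmatrix}M & b\\ 0 & 1\end{smallmatrix}\bigr)$; equivalently, $T$ is any $\Zed$-linear endomorphism of $\Zed^{n+1}$ preserving the last coordinate. Since $v\in\{0,1\}^{n}$ has $\den(v)=1$, its homogeneous correspondent is the lattice point $\tilde v=(v,1)\in\Zed^{n+1}$, while $\tilde x_{i}=\den(x_{i})(x_{i},1)$ has last coordinate $\den(x_{i})$.

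By the unimodularity hypothesis on $S$, the integer vectors $\tilde x_{1},\ldots,\tilde x_{k}$ extend to a $\Zed$-basis $\tilde x_{1},\ldots,\tilde x_{k},f_{k+1},\ldots,f_{n+1}$ of $\Zed^{n+1}$. I would then \emph{define} $T$ on this basis by
\begin{equation*}
T(\tilde x_{i})=\begin{cases} \den(x_{i})\,\tilde v & \text{if } x_{i}\in Y,\\ \tilde x_{i} & \text{otherwise,}\end{cases}\qquad T(f_{j})=f_{j}\ \ (j=k+1,\ldots,n+1),
\end{equation*}
and extend $\Zed$-linearly. Because the prescribed images are integer vectors, $T$ has integer matrix in the standard basis.

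The key verification is that $T$ preserves the last coordinate: on each $\tilde x_{i}$ both possible images have last entry $\den(x_{i})$, and on each $f_{j}$ this is immediate, so the property holds on a basis and thus on all of $\Zed^{n+1}$. Consequently the matrix of $T$ has the required block form, yielding $M\in\Zed^{n\times n}$ and $b\in\Zed^{n}$. Unpacking $T(\tilde x_{i})=\den(x_{i})(Mx_{i}+b,1)$ and comparing with the definition of $T$ gives $Mx_{i}+b=v$ when $x_{i}\in Y$ and $Mx_{i}+b=x_{i}$ otherwise, which is \eqref{eq:zedmap} (with the evident reading that the typographic ``$b_{i}$'' there stands for the single vector $b$). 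The only substantive ingredient is the $\Zed$-basis extension of $\{\tilde x_{1},\ldots,\tilde x_{k}\}$, which is precisely what unimodularity affords; beyond that, the argument is routine bookkeeping between affine maps on $\cube$ and $\Zed$-linear maps preserving the last coordinate, so I do not anticipate further obstacles.
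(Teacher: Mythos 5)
Your proof is correct and follows essentially the same route as the paper: both extend $\{\tilde x_{1},\ldots,\tilde x_{k}\}$ to a $\Zed$-basis of $\Zed^{n+1}$, define the integer endomorphism sending $\tilde x_{i}$ to $\den(x_{i})(v,1)$ or to itself while fixing the complementary basis vectors, check that the last coordinate (equivalently, the last matrix row) is preserved, and read off $M$ and $b$ from the resulting block form. The paper merely phrases this as the explicit matrix product $CD^{-1}$ rather than as a linear map defined on the basis.
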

\begin{proof}  Since
$S$ is  unimodular,
 the set $\{\tilde{x_{1}},\ldots, \tilde{x_{k}}\}$
 of homogeneous
correspondents
of
$x_{1},\ldots,x_{k}$
can be extended to a basis
$
\{\tilde{x_{1}},\ldots, \tilde{x_{k}}, q_{k+1},\ldots,
q_{n+1}\}
$
of the free abelian group $\Zed^{n+1}$.  The
$(n+1)\times(n+1)$ matrix $D$ with column vectors
$\tilde{x_{1}},\ldots,\tilde{x_{k}}, q_{k+1},\ldots, q_{n+1}$
is invertible and $D^{-1}\in\Zed^{(n+1)\times (n+1)}.$
For each $i=1,\ldots,t$ let  $c_{i}\in
\Zed^{n+1}$ be defined  by
$$
 c_{i}=\left\{\begin{tabular}{ll}
       $\den(x_i)(v,1)$ & if $x_i\in Y$, \\
       $\tilde{x_{i}}$ & otherwise.
       \end{tabular}\right.
$$
 Let $C\in\Zed^{(n+1)\times (n+1)}$ be the matrix whose
columns are given by the vectors $c_{1},\ldots,c_{k}, q_{k+1},\ldots,
q_{n+1}$.  Since $D$ and $C$ have the same $(n+1)$th row,
     $$
       C D^{-1}=\left(\begin{tabular}{c|c}
       $M$ & $ d$ \\
       \hline
       $0,\ldots,0 $ & $1$
       \end{tabular}\right)
     $$
     for some  $n\times n$ integer matrix $M$
     and   vector $ d\in \mathbb Z^{n}$.
     For each  $i=1,\ldots,k$ we then have
     $
     (C D^{-1}) \tilde{x_{i}}=(C
     D^{-1})\den(x_{i})(x_{i},1)=\den(x_{i})(Mx_{i}+ d,1).
     $
     By definition,
     $(C D^{-1}) \tilde{x_{i}}=c_{i}=\den(x_{i})(v,1)$ if
     $x_i\in Y$ and $(C D^{-1}) \tilde{x_{i}}=\tilde{x_{k}}=\den(x_{i})(x_i,1)$
     otherwise. Thus (\ref{eq:zedmap}) is satisfied.
\end{proof}

 \begin{theorem}
     \label{theorem:projective}
     Suppose the unital $\ell$-group $(G,u)$
     has a basis  $\mathcal B$ with
     $\bigwedge \mathcal B\not= 0.$ Suppose
     at least one of the multiplicities of $\mathcal B$
     is equal to $1$.
	 Then $(G,u)$  is projective.
     \end{theorem}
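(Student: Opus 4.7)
The strategy is to reduce projectivity to the existence of a $\Zed$-retraction of $[0,1]^{n}$ onto $\mathcal Z\mathfrak i$, then build such a retraction by exploiting the vertex of $T_{\mathcal B}$ supplied by the multiplicity-one hypothesis. By Theorem \ref{theorem:basis} we may identify $(G,u)$ with $\McNn/\mathfrak i$ for a principal ideal $\mathfrak i$. Inspecting (\ref{equation:circles}) in the proof of Theorem \ref{theorem:basis}, together with the assumption $\bigwedge\mathcal B\neq 0$ (which forces $\mathcal B\in\mathcal B^{\bowtie}$ and hence makes every $T_{C}$ with $C\in\mathcal B^{\bowtie}$ a face of $T_{\mathcal B}$), one obtains $\mathcal Z\mathfrak i = T_{\mathcal B}=\conv\{e_{1}/m_{1},\dots,e_{n}/m_{n}\}$, which is a unimodular $(n-1)$-simplex (easy determinant check) and, by Lemma \ref{lemma:zeroset}, the full vanishing locus of $\mathfrak i$. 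Consequently any $\Zed$-map $\rho\colon[0,1]^{n}\to[0,1]^{n}$ with $\rho([0,1]^{n})\subseteq T_{\mathcal B}$ and $\rho|_{T_{\mathcal B}}=\id$ pulls back to an idempotent unital $\ell$-endomorphism $f\mapsto f\circ\rho$ of $\McNn$ whose image is isomorphic to $\McNn/\mathfrak i\cong(G,u)$, exhibiting $(G,u)$ as a retract of the free object $\McNn$ and hence as projective.

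To construct $\rho$, fix an index $j$ with $m_{j}=1$, so that $e_{j}$ is simultaneously a vertex of $[0,1]^{n}$ and of $T_{\mathcal B}$. By Proposition \ref{proposition:triangP}(iii), choose a unimodular triangulation $\Sigma$ of $[0,1]^{n}$ for which $T_{\mathcal B}=\bigcup\{S\in\Sigma\colon S\subseteq T_{\mathcal B}\}$; for every $S\in\Sigma$ the set $\ver(S)\cap T_{\mathcal B}$ then spans a face of $S$ contained in $T_{\mathcal B}$. For each $S=\conv(x_{1},\dots,x_{k})\in\Sigma$, let $Y_{S}=\ver(S)\setminus T_{\mathcal B}$ and apply Lemma \ref{lemma:LinearMap} with $v=e_{j}$ to obtain an affine $\Zed$-map $L_{S}$ that sends every vertex in $Y_{S}$ to $e_{j}$ and fixes the remaining vertices of $S$. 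Define $\rho$ on $S$ to be $L_{S}$.

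For well-definedness, if $S_{1},S_{2}\in\Sigma$ share a face $F$, then on every vertex of $F$ the maps $L_{S_{1}}$ and $L_{S_{2}}$ agree (a vertex in $T_{\mathcal B}$ is fixed by both, a vertex outside $T_{\mathcal B}$ is sent to $e_{j}$ by both), and since $F$ is a simplex the two affine maps coincide throughout $F$. Thus $\rho$ is continuous and piecewise $\Zed$-affine on $\Sigma$, hence a $\Zed$-map. Each $L_{S}$ sends $S$ into $\conv(\{e_{j}\}\cup(\ver(S)\cap T_{\mathcal B}))\subseteq T_{\mathcal B}$, so $\rho([0,1]^{n})\subseteq T_{\mathcal B}$; and whenever $S\subseteq T_{\mathcal B}$ we have $Y_{S}=\emptyset$, so $L_{S}=\id$, yielding $\rho|_{T_{\mathcal B}}=\id$. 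The main obstacle is precisely the compatibility of the maps $L_{S}$ across shared faces: this is secured by the availability of a single common target vertex $e_{j}\in T_{\mathcal B}\cap\{0,1\}^{n}$ toward which all ``external'' vertices may be uniformly collapsed. Without the hypothesis $\bigwedge\mathcal B\neq 0$ one could not guarantee that $\mathcal Z\mathfrak i$ is a single simplex, and without a multiplicity equal to $1$ no vertex of $T_{\mathcal B}$ would lie in $\{0,1\}^{n}$ for Lemma \ref{lemma:LinearMap} to collapse toward; both hypotheses thus play an essential and explicit role in the construction.
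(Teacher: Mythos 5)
Your proof is correct and follows essentially the same route as the paper: identify $(G,u)\cong\McNn/\mathfrak i$ with ${\mathcal Z}\mathfrak i=T_{\mathcal B}$ (using $\bigwedge\mathcal B\neq 0$), take a unimodular triangulation of $[0,1]^{n}$ in which $T_{\mathcal B}$ is a union of simplexes, collapse all vertices outside $T_{\mathcal B}$ to the cube vertex $e_{j}$ with $m_{j}=1$ (the paper reorders so that $j=1$), invoke Lemma \ref{lemma:LinearMap} for integrality of the linear pieces, and pull the resulting retraction back to an idempotent unital $\ell$-endomorphism of $\McNn$ whose image is $\cong(G,u)$. Your simplex-by-simplex definition with the face-compatibility check is just a more explicit rendering of the paper's global ``define on vertices and extend linearly'' construction.
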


\begin{proof}
By assumption, $\mathcal B$ itself is a basis of
$(G,u)$ with multiplicities $1=m_{1}\leq m_{2}\leq\ldots\leq m_{n}.$
We keep the notation of the proof of Theorem \ref{theorem:basis}.  In
particular, $T_{\mathcal
B}=\conv(e_{1},e_{2}/m_{2},\ldots,e_{n}/m_{n})$, where, as the reader
will recall, $e_{i}$ denotes the $i$th basis vector in $\mathbb
R^{n}$.
Proposition \ref{proposition:triangP}
yields a
unimodular triangulation $\Delta$ of $\cube$ such that $T_{\mathcal B}$ is a
union of simplexes of $\Delta$, and all vertices of
(every simplex of) $\Delta$ have rational coordinates.

We next define  the function ${\bf f}
\colon [0,1]^{n}\to [0,1]^{n}$ by stipulating that,
for each vertex $v$ of $\Delta$,
\begin{equation}
    \label{equation:retraction}
      {\bf f}(v)= \left\{
         \begin{tabular}{ll}
            $v$  & if $v\in  T_{\mathcal B} $ \\[1mm]
            $e_{1}$ & if  $v \not\in  T_{\mathcal B} $
         \end{tabular}
      \right.
\end{equation}
    and ${\bf f}$ is linear over each
    simplex of $\Delta$.
Then  $\bf f$ is a continuous map
    and ${\bf f}\restrict T_{\mathcal B}$
    is the identity map on $T_{\mathcal B}.$
For any simplex $S$ of $\Delta$, let $\partial S$ denote the set of
extremal points of $S$.  Since ${\bf f}$ is linear over $S$ and ${\bf
f}(v)\in T_{\mathcal B}$ for each $v\in\partial S$, we have $ {\bf
f}(S)={\bf f}(\conv(\partial S))=\conv({\bf f}(\partial S)) \subseteq
\conv(T_{\mathcal B})=T_{\mathcal B}, $ whence
\begin{equation}
\label{equation:ontoTb}
{\bf f}([0,1]^n)=T_{\mathcal B}.
\end{equation} We have thus shown that ${\bf f}\circ {\bf f}={\bf f}$
and ${\bf f}$ is a continuous retraction of
$\cube$  onto $T_{\mathcal B}$  which is linear
on each simplex of $\Delta$.

 By Lemma \ref{lemma:LinearMap}, the
coefficients of each linear piece of $\bf f$
are integers. Therefore, the map $\varphi\colon\McNn\to \McNn$ given by
    \begin{equation}\label{equation:defvarphi}
    \varphi(g)=
    g\circ {\bf f}.
    \end{equation}
is well defined. It follows straightforwardly that $\varphi$ is a
unital $\ell$-homomorphism.
Since ${\bf f}\circ {\bf f}={\bf f}$ then $\varphi \circ
\varphi=\varphi.$ In other words, $\varphi$ is an idempotent
endomorphism of $\McNn$.  Stated otherwise, the unital $\ell$-subgroup
$\varphi(\McNn)$ of $\McNn$ is a retraction of $\McNn$.  Applying now
the universal property of $\McNn$, (Proposition
\ref{proposition:free}) one sees that $\McNn$ is projective.  A
routine exercise using the fact that $\varphi(\McNn)$ is a retraction
of $\McNn$ shows that $\varphi(\McNn)$ is projective.

To conclude the proof it is enough to show that $\varphi(\McNn)$ is
unitally $\ell$-isomorphic to $(G,u)$.  In proving the
$(\Leftarrow)$-direction of Theorem \ref{theorem:basis} we have seen
that $(G,u)$ is unitally $\ell$-isomorphic to $\McNn/\mathfrak i$, for
some ideal $\mathfrak i$ having following characterization:
$$
\mathfrak i=\left\{g\in\McNn\mid\mathcal Z
g\supseteq\bigcup_{C\in \mathcal
B^{\bowtie}}T_{C} \right\} =\{g\in\McNn\mid\mathcal
Z g\supseteq T_{\mathcal
B}\}.
$$
Letting $\ker(\varphi)$ be the kernel of $\varphi$, by
(\ref{equation:ontoTb}) and (\ref{equation:defvarphi}) we have
$$
  g\in\ker(\varphi)\Leftrightarrow g\circ{\bf f}=0
\Leftrightarrow g({\bf f}([0,1]^n))=\{0\}
\Leftrightarrow g(T_{\mathcal B})=\{0\}
\Leftrightarrow \mathcal Z g\supseteq T_{\mathcal B}
\Leftrightarrow g\in\mathfrak i.
$$
Therefore,
$
(G,u)\cong\McNn/\mathfrak i=\McNn/\mathfrak
\ker(\varphi)\cong\varphi(\McNn),
$
and the proof is complete.
\end{proof}

\section{The underlying dimension group of
a unital $\ell$-group with a basis}
In the category $\mathcal P$ of {\it partially ordered abelian groups
with order-unit}, \cite[p.12]{goo2} objects are pairs $(G,u)$, where
$G$ is a partially ordered abelian group and $u$ is an order-unit of
$G$.  A morphism $\phi\colon (G,u)\to (H,v)$ of $\mathcal P$ is a
{\it unital} (i.e., unit-preserving) {\it positive} (in the sense that
$\phi(G^{+})\subseteq H^{+}$) homomorphism.

Following \cite[p.47]{goo2}, by a {\it unital simplicial} group we
understand an object of $\mathcal P$ that is isomorphic (in $\mathcal
P$) to the free abelian group
$\mathbb Z^{n}$ for some integer $n>0$ equipped with the
product ordering: $(x_{1},\ldots,x_{n})\geq 0\,\,$ iff $\,\,x_{i}\geq
0\,\,\,\forall i=1,\ldots,n.$

A {\it unital dimension group} $(G,u)$ is an object of $\mathcal P$
such that $G=G^{+}-G^{+}$, sums of intervals are intervals, and $kg\in
G^{+}\Rightarrow g\in G^{+},$ for any $g\in G$ and  integer
$k>0$.  For short, $G$ is directed, Riesz, and unperforated,
\cite[p.44]{goo2}.  In \cite[\S 2]{fuc-pisa} one can find several
characterizations of the Riesz property.  By Elliott classification
theory \cite{goo1}, countable unital dimension groups are complete
classifiers of AF algebras, the norm limits of ascending sequences of
finite-dimensional C$^{*}$-algebras, all with the same unit.

Given a unital $\ell$-group $(G,u)$ let $(G,u)_{\dim}$ denote the
underlying group of $(G,u)$ equipped with the same positive cone
$G^{+}$ and order-unit $u$, but forgetting the lattice structure of
$(G,u)$.  Then $(G,u)_{\dim}$ is a unital dimension group.  Thus in
particular, every unital simplicial group is a unital dimension group.
Since the properties of directedness, Riesz, and unperforatedness are
preserved by direct limits, then direct limits of unital simplicial
groups are unital dimension groups.

The celebrated Effros-Handelman-Shen theorem \cite{effhanshe},
\cite[3.21]{goo2} (also see Grillet's theorem \cite[2.1]{gri} in the
light of \cite[Remark 3.2]{gooweh}) states the converse: for every
unital dimension group $(G,u)$ we can write
$$
    (G,u) \cong\lim \{\phi_{ij}
    \colon
    (\mathbb Z^{n_{i}}, u_{i})\to
    (\mathbb Z^{n_{j}}, u_{j})\mid
    i,j\in I\}
$$
 for some direct system of unital simplicial groups and unital positive
    homomorphisms in $\mathcal P$.  For dimension groups of the form
    $(G,u)_{\dim}$, with $(G,u)$ a unital $\ell$-group, Marra \cite{mar}
proved that the maps $\phi_{ij}$ can be assumed to be 1-1.

A further simplification occurs when $(G,u)$ has a basis: as a matter
of fact, in Theorem \ref{theorem:system} below we will prove that the
set of bases of $(G,u)$ is rich enough to provide a direct system of
unital simplicial groups and 1-1 unital homomorphisms such that
$(G,u)_{\dim}$ is the limit of this system in the category $\mathcal
P$.  To this purpose, given a basis $\mathcal
B=\{b_{1},\ldots,b_{n}\}$ of a unital $\ell$-group $(G,u)$, we let $$
\grp\mathcal B = \mathbb Z b_{1}+\cdots+\mathbb Z b_{n} $$ denote the
group generated by ${\mathcal B}$ in (the underlying group of) $G$.
Similarly, $$ \sgr \mathcal B = \mathbb Z_{\geq 0}
\,b_{1}+\cdots+\mathbb Z_{\geq 0} \,b_{n} $$ will denote the semigroup
generated by ${\mathcal B}$ together with the zero element.

Assuming, as we are doing throughout the rest of this paper, that the
elements of $\mathcal B$ are listed in some prescribed order, by
definition of $\mathcal B$ the $n$-tuple of multiplicities
$m_{\mathcal B}=(m_{1},\ldots,m_{n})$ is uniquely determined by the
$n$-tuple $(b_{1},\ldots,b_{n})$.

\begin{proposition}
\label{proposition:basic}
Let $\mathcal B=\{b_{1},\ldots,b_{n}\}$ be a
basis
of a unital
$\ell$-group $(G,u)$.
Let
  $$ G_{\mathcal B} =
(\grp\mathcal B, \,\,\,\sgr \mathcal B,\,\, u)$$
denote the group $\grp\mathcal B$
equipped with the positive
cone $\sgr \mathcal B$
and
with the distinguished element $u=
\sum m_{i}b_{i}$.  Let
$$\mathbb Z_{\mathcal B}=
(\mathbb Z^{n},{({\mathbb Z}^{+}})^{n}, m_{\mathcal B})
$$
be the standard simplicial group of rank $n$, with the
$n$-tuple $m_{\mathcal B}$ as
a distinguished element.
Then
\begin{enumerate}
\item $\mathcal B$ is a free generating set of
the free abelian group $\grp\mathcal B$ of rank $n$.

\smallskip
\item $G^{+}\cap \grp\mathcal B=\sgr \mathcal B.$

\smallskip
\item The map $b_{i}\mapsto e_{i}$ uniquely extends
to an isomorphism
$\psi_{\mathcal B}\colon \grp_{\mathcal
B}\cong \mathbb Z^{n}$.

\smallskip
\item
$\psi_{\mathcal B}$
is in fact an isomorphism (in the category
$\mathcal P$) of $G_{\mathcal B}$  onto
$\mathbb Z_{\mathcal B}$,  whence
$G_{\mathcal B}$  is a unital simplicial group,
called the {\rm basic group of $\mathcal B$};
further,
$\mathcal B$ is the set of atoms (=minimal positive nonzero
elements)  of $G_{\mathcal B}$;
thus if $\mathcal B' \not= \mathcal B$ is
another basis of $(G,u)$ then $G_{\mathcal B} \not=
G_{\mathcal B'}$.
\end{enumerate}
\end{proposition}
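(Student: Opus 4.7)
The plan is to work inside the identification $(G,u) \cong \McNn/\mathfrak i$ supplied by Theorem \ref{theorem:basis}, under which $b_i$ corresponds to $\pi_i/\mathfrak i$, and to separate the generators $b_1,\ldots,b_n$ by evaluating them at the distinguished points $e_i/m_i \in \mathcal Z \mathfrak i$ already isolated in the proof of Theorem \ref{theorem:basis}.

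Concretely, for each $i \in \{1,\ldots,n\}$ the singleton $\{b_i\}$ is a cluster of $\mathcal B$ (since $b_i \neq 0$), and the base case of Claim 2 in the proof of Theorem \ref{theorem:basis} yields $\apor(\{b_i\}) = \{e_i/m_i\}$. Writing $x_i = e_i/m_i$ and $\mathfrak n_i = \mathfrak m_{x_i}/\mathfrak i$ for the corresponding maximal ideal of $\McNn/\mathfrak i$, formula (\ref{equation:beta-alpha}) gives, for any integers $k_1,\ldots,k_n$ and $g = \sum_j k_j b_j$, the equality $g/\mathfrak n_i = \sum_j k_j\,\pi_j(x_i) = k_i/m_i$ as a real number. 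This is the single input that drives items (1)--(3).

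From the evaluation formula, item (1) is immediate: $\sum_j k_j b_j = 0$ in $G$ forces $k_i/m_i = 0$ for each $i$, so all $k_i$ vanish, proving $\mathbb Z$-linear independence of $\mathcal B$ and hence that $\grp \mathcal B$ is free abelian of rank $n$ on $\mathcal B$. Item (2) is equally quick: the inclusion $\sgr \mathcal B \subseteq G^+ \cap \grp \mathcal B$ is trivial, and conversely if $g = \sum_j k_j b_j \geq 0$, then $k_i/m_i = g/\mathfrak n_i \geq 0$ by positivity of the quotient map into $\mathbb R$, whence each $k_i \geq 0$ and $g \in \sgr \mathcal B$. Item (3) is then the universal property of the free abelian group $\grp \mathcal B$.

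For (4), I would verify by inspection that $\psi_{\mathcal B}$ sends $\sgr \mathcal B$ onto $(\mathbb Z^+)^n$ (which is exactly $G^+ \cap \grp \mathcal B$ transported across $\psi_{\mathcal B}$, by (2)) and that it sends $u = \sum_i m_i b_i$ to $m_{\mathcal B}$; this upgrades $\psi_{\mathcal B}$ to an isomorphism in $\mathcal P$, so $G_{\mathcal B}$ is unital simplicial. The atoms of $(\mathbb Z^n, (\mathbb Z^+)^n)$ under the product order are precisely the standard basis vectors $e_1,\ldots,e_n$, so by pullback through $\psi_{\mathcal B}$ the atoms of $G_{\mathcal B}$ are precisely $b_1,\ldots,b_n$; since the set of atoms of $G_{\mathcal B}$ is an invariant of $G_{\mathcal B}$ as an object of $\mathcal P$, two distinct bases must give rise to two distinct basic groups. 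The only non-bookkeeping step is the evaluation argument in the first paragraph, which hinges on having the points $e_i/m_i$ available in $\mathcal Z \mathfrak i$; once that input from the proof of Theorem \ref{theorem:basis} is reused, everything else is routine.
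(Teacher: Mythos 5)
Your argument is correct, and its engine is the same as the paper's: separate the basis elements by the $n$ maximal ideals at which all other basis elements vanish, and use condition (iii) of Definition \ref{def:basis} together with the H\"older identification to see that $b_i$ takes the value $1/m_i$ there; items (1)--(3) then follow by reading off coefficients, and (4) is the bookkeeping you describe (including the correct observation that the atom set is determined by the structure $G_{\mathcal B}$ itself, so distinct bases give distinct basic groups). The difference is one of routing: you transport everything into the model $\McNn/\mathfrak i$ and rely on the base case of Claim~2 and formula (\ref{equation:beta-alpha}) from the proof of Theorem \ref{theorem:basis} to realize these evaluations concretely at the points $e_i/m_i\in\mathcal Z\mathfrak i$, whereas the paper argues intrinsically: Definition \ref{def:basis}(ii) applied to the one-element clusters directly furnishes, inside $G$, the unique maximal ideal $\mathfrak n_j\supseteq\mathcal B\setminus\{b_j\}$, and Lemma \ref{lemma:hoelder} plus condition (iii) give $b_j/\mathfrak n_j=1/m_j$, with no appeal to the finite-presentation machinery. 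Your route is legitimate (Theorem \ref{theorem:basis} is already proved, so there is no circularity) and has the virtue of making the separating ideals completely explicit as evaluation points; the paper's route is lighter, since Proposition \ref{proposition:basic} then depends only on the definition of a basis and H\"older, not on the representation $(G,u)\cong\McNn/\mathfrak i$. For item (1) you also get a small bonus: your evaluation argument proves the $\mathbb Z$-linear independence outright, where the paper asserts it rather tersely from condition (ii) and torsion-freeness.
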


 \begin{proof}
(1) By condition (ii) in the definition of $\mathcal B$,
no nonzero linear combination of the elements of
$\mathcal B$ is zero in (the $\mathbb Z$-module)
$G$.  It is well known
that $G$ is torsion-free.  Thus $\mathcal B$ is a free generating set
in $\grp\mathcal B$, and $\grp \mathcal B$ is free abelian of rank
$n$.

To prove (2),  suppose $g\in G^{+}\cap \grp\mathcal B$, and write
$g=\sum_{i=1}^{n}l_{i}b_{i}$ for suitable integers
$l_{1},\ldots,l_{n}$.  Fix now $j\in \{1,\ldots,n\}$ and let
$\mathfrak n_{j}$ be the only maximal ideal of $G$ such that $b_{k}\in
\mathfrak n_{j}$ for all $k\not=j,$ as given by condition (ii) in the
definition of $\mathcal B$.  By condition (iii) we have $$ 0\leq
\sum_{i=1}^{n}l_{i}b_{i} \Rightarrow 0\leq
\frac{\sum_{i=1}^{n}l_{i}b_{i}}{\mathfrak n_{j}} =
\frac{l_{j}b_{j}}{\mathfrak n_{j}} = \frac{l_{j}}{m_{j}}, $$ whence
$0\leq l_{j}$ for all $j$, and $g\in \sgr\mathcal B.$ The converse
inclusion is trivial.

To prove (3) it is enough to note that the map $b_{i}\mapsto e_{i}$ is
a one-one correspondence between the free generating set $\mathcal B$
of $\grp\mathcal B$ and the free generating set
$\{e_{1},\ldots,e_{n}\}$ of $\mathbb Z^{n}$.

Concerning (4).  It is easy to see that
$\mathcal B$ is the set of atoms of
$G_{\mathcal B}$, and $\{e_{1},\ldots,e_{n}\}$ is the set of atoms of
the simplicial group $(\mathbb Z^{n},{({\mathbb Z}^{+}})^{n})$.  Thus
$\psi_{\mathcal B}$ is an isomorphism of $G_{\mathcal B}$ onto
$(\mathbb Z^{n},{{\mathbb Z}^{+}}^{n})$, and $G_{\mathcal B}$ is
simplicial.  Trivially, $\psi_{\mathcal B}$ preserves the order-unit.
So $G_{\mathcal B}$ is a unital simplicial group which is isomorphic
(in $\mathcal P$) to $\mathbb Z_{\mathcal B}.$
The rest is clear.  \end{proof}

\bigskip \noindent Given two bases $\mathcal B'$ and $\mathcal B$ of a
unital $\ell$-group $(G,u)$ we say that $\mathcal B'$ {\it refines}
$\mathcal B$ if $ \mathcal B\subseteq \sgr \mathcal B'.  $

\medskip

{}From the foregoing proposition we immediately obtain.

\begin{proposition}
    \label{proposition:refine-to-map}
  Let
 $\mathcal B'=\{b'_{1},\ldots,b'_{n'}\}$ and
 $\mathcal B=\{b_{1},\ldots,b_{n}\}$ be bases of a
 unital $\ell$-group
     $(G,u)$ such that
 $\mathcal B'$ refines
$\mathcal B$.
We then have:
\begin{enumerate}
\item For each $i=1,\ldots,n$,
the element
$b_{i}$ is
expressible as a linear combination
$b_{i}=m_{1i}b'_{1}+\cdots+m_{n'i}b'_{n'}\,$, for
uniquely determined integers
$m_{ki}\geq 0,\,\,\,(k=1,\ldots,n')$.

\smallskip
\item The  $n'\times n$ integer matrix
$M_{\mathcal B\mathcal B'}$ whose entries are the $m_{ki}$,
has rank equal to $n$.

\smallskip
\item The inclusion map
$G_{\mathcal B}\to G_{\mathcal B'}$
induces the
unital positive $1$-$1$ homomorphism
$$
\phi_{\mathcal B\mathcal B'}\colon
(y_{1},\ldots,y_{n}) \in \mathbb Z^{n}
\mapsto (z_{1},\ldots,z_{n'})=
M_{\mathcal B\mathcal B'}(y_{1},\ldots,y_{n}) \in \mathbb
Z^{n'}
$$
of $(\mathbb Z_{\mathcal B},m_{\mathcal B})$ into
$(\mathbb Z_{\mathcal B'},m_{\mathcal B'})$,
and we have a commutative diagram

\begin{equation}
    \label{equation:diagram}
\def\normalbaselines{\baselineskip20pt
\lineskip3pt
\lineskiplimit3pt}
\def\mapright#1{\smash{\mathop{\longrightarrow}\limits^{#1}}}
\def\mapdown#1{\Big\downarrow\rlap{$\vcenter{\hbox{$\scriptstyle#1$}}$}}
\begin{matrix}
G_{\mathcal B} &\mapright{\rm inclusion}&G_{\mathcal B'}\cr
\mapdown{\psi_{\mathcal B}}& &\mapdown{\psi_{\mathcal B'}}\cr
     (\mathbb Z_{\mathcal B},m_{\mathcal B}) &\mapright{{\phi_{\mathcal B
     \mathcal B'}}}&  (\mathbb Z_{\mathcal B'},m_{\mathcal B'})\cr
\end{matrix}
\end{equation}
\end{enumerate}
\end{proposition}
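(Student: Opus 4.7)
The plan is to treat (1), (2), (3) in sequence, leaning throughout on Proposition~\ref{proposition:basic} and on the fact that $G$ is torsion-free.

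For (1), the hypothesis $\mathcal B\subseteq\sgr\mathcal B'$ immediately supplies, for each $b_i$, an expression $b_i=\sum_{k=1}^{n'}m_{ki}b'_k$ with $m_{ki}\in\mathbb Z_{\geq 0}$. Uniqueness of the $m_{ki}$ is then a direct consequence of Proposition~\ref{proposition:basic}(1) applied to $\mathcal B'$: the set $\mathcal B'$ freely generates the free abelian group $\grp\mathcal B'$, so any two nonnegative integer representations of $b_i$ coincide term by term. For (2), I would interpret $M_{\mathcal B\mathcal B'}$ as the matrix, relative to the free generating sets $\mathcal B$ of $\grp\mathcal B$ and $\mathcal B'$ of $\grp\mathcal B'$, of the group-theoretic inclusion $\grp\mathcal B\hookrightarrow\grp\mathcal B'$. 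Since this inclusion is injective (both subgroups sit inside the torsion-free $G$) and $\grp\mathcal B$ has rank $n$ by Proposition~\ref{proposition:basic}(1) applied now to $\mathcal B$, the $n$ columns of $M_{\mathcal B\mathcal B'}$ must be $\mathbb Z$-linearly independent, forcing the rank to equal $n$.

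For (3), I would check the commutativity of diagram~(\ref{equation:diagram}) by evaluating both paths on each generator $b_i$: the down-then-right route gives $b_i\mapsto e_i\mapsto M_{\mathcal B\mathcal B'}e_i$, the $i$-th column of $M_{\mathcal B\mathcal B'}$, while the right-then-down route gives $b_i\mapsto b_i\mapsto\psi_{\mathcal B'}\bigl(\sum_k m_{ki}b'_k\bigr)=\sum_k m_{ki}e'_k$, the same column. Positivity of $\phi_{\mathcal B\mathcal B'}$ is automatic from $m_{ki}\geq 0$, and injectivity follows either from (2) or from the trivial observation that the upper horizontal arrow is a genuine inclusion of subgroups of $G$. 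Unit preservation, namely $\phi_{\mathcal B\mathcal B'}(m_{\mathcal B})=m_{\mathcal B'}$, is then forced by evaluating commutativity at $u=\sum_i m_ib_i$: tracing $u$ around the square yields $\phi_{\mathcal B\mathcal B'}(m_{\mathcal B})=\psi_{\mathcal B'}(u)=m_{\mathcal B'}$.

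The only genuinely substantive point is this unit-preservation identity, and it relies on the uniqueness of the multiplicity tuple $m_{\mathcal B'}$ noted immediately after Definition~\ref{def:basis}; without that uniqueness, the vector $\psi_{\mathcal B'}(u)$ would not be canonically identified with $m_{\mathcal B'}$. All remaining steps reduce to bookkeeping with integer matrices and to the explicit isomorphisms $\psi_{\mathcal B},\psi_{\mathcal B'}$ of Proposition~\ref{proposition:basic}; once the diagram is checked on the free generators $b_i$, the other clauses drop out almost for free.
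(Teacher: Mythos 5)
Your proof is correct and takes essentially the approach the paper intends: the paper offers no explicit argument, stating only that the proposition follows immediately from Proposition~\ref{proposition:basic}, and your verification---free generation of $\grp\mathcal B'$ for the uniqueness in (1), injectivity of the inclusion of the rank-$n$ free group $\grp\mathcal B$ into $\grp\mathcal B'$ for (2), and checking the square on the generators $b_i$ and on $u=\sum_i m_ib_i$ for positivity, injectivity and unit preservation in (3)---is exactly the routine bookkeeping the authors leave to the reader. No gaps.
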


\bigskip \begin{theorem}
\label{theorem:system} Suppose the unital
$\ell$-group $(G,u)$ has a basis.
We then have:

 \begin{enumerate}
\item Any two basic groups $G_{\mathcal B}, G_{\mathcal F}$ of $(G,u)$
are jointly embeddable (by unit preserving, order preserving
inclusions) into some basic group $G_{\mathcal B'}$ of $(G,u)$.

\smallskip
 \item We then have a direct system $ \{\phi_{\mathcal B\mathcal B'}
 \colon (\mathbb Z_{{\mathcal B}},m_{\mathcal B}) \to (\mathbb
 Z_{{\mathcal B'}},m_{\mathcal B'})\} $ of unital simplicial groups and
 unital positive $1$-$1$ homomorphisms in $\mathcal P$, indexed by all
 pairs $\mathcal B,\mathcal B'$ of bases of $(G,u)$ such that $
 \mathcal B \subseteq \sgr \mathcal B'$.

\smallskip
 \item Further, $ \lim \,\, \{\phi_{\mathcal B\mathcal B'}
\colon (\mathbb Z_{{\mathcal B}},m_{\mathcal B}) \to (\mathbb
Z_{{\mathcal B'}},m_{\mathcal B'})\} \cong (G,u)_{\dim}\,.$
\end{enumerate}

\end{theorem}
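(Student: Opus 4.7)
The plan is to translate bases of $(G,u)$ into unimodular triangulations of $\maxspec(G,u)$---now realized as a fixed rational polyhedron via Corollaries \ref{corollary:HolI}--\ref{corollary:HolII}---and to invoke the classical common unimodular refinement theorem for rational polyhedra, cf.\ \cite{ewa}. Part (1) carries the main technical weight; parts (2) and (3) then follow almost formally.

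For part (1), I would fix the presentation $(G,u)\cong \McNn/\mathfrak i$ arising from the basis $\mathcal B$ as in the proof of Theorem \ref{theorem:basis}. Under the homeomorphism $\dot{\mathcal Z}$ of Corollary \ref{corollary:HolI}, $\maxspec(G,u)$ becomes the rational polyhedron $P=\mathcal Z\mathfrak i=\bigcup_{C\in \mathcal B^{\bowtie}} T_C\subseteq \cube$, and $\mathcal B$ is recovered as the family of Schauder hats at the vertices $e_i/m_i$ of the unimodular triangulation $\Sigma_{\mathcal B}=\{T_C\mid C\in \mathcal B^{\bowtie}\}$ of $P$. Lifting each $f_j\in \mathcal F$ to $\tilde f_j\in \McNn$, I would apply Proposition \ref{proposition:triangP} to choose a unimodular triangulation $\Sigma'$ of $\cube$ that simultaneously refines $\Sigma_{\mathcal B}$, keeps $P$ as a subcomplex, and is fine enough that every $\tilde f_j$ is linear over each simplex. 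The set $\mathcal B'$ of (nonzero classes modulo $\mathfrak i$ of) Schauder hats $h_v$ at vertices $v\in \ver(\Sigma')\cap P$ then satisfies Definition \ref{def:basis}: conditions (i) and (iii) are the standard Schauder partition-of-unity (note $h_v|_P=0$ when $v\notin P$); condition (ii) holds because, via Corollary \ref{corollary:HolI}, a $k$-element subset has non-vanishing meet iff the corresponding vertices span a simplex of $\Sigma'$ contained in $P$, and the set of maximal ideals not containing that cluster is exactly that $(k-1)$-simplex. Finally, each $b_i\in \mathcal B$ (resp.\ each $f_j\in \mathcal F$, after lifting so that $\tilde f_j\geq 0$ on $P$) is a non-negative integer combination of hats in $\mathcal B'$ by the standard Schauder decomposition, so $\mathcal B\cup \mathcal F\subseteq \sgr \mathcal B'$, as required.

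Part (2) is essentially formal: Proposition \ref{proposition:refine-to-map} furnishes the arrows $\phi_{\mathcal B\mathcal B'}$ and their compatibility with $\psi_{\mathcal B}$; they compose correctly because they are induced by honest inclusions of subgroups of $G$, and part (1) certifies that the indexing set is directed. For part (3), the inclusions $G_{\mathcal B}\hookrightarrow (G,u)_{\dim}$ from Proposition \ref{proposition:basic} assemble into a cocone over the direct system, yielding a canonical unital positive homomorphism $\Phi$ from the limit into $(G,u)_{\dim}$. Surjectivity of $\Phi$: given $g\in G$, the refinement procedure of part (1), with a lift $\tilde g\in \McNn$ playing the role of $\mathcal F$, yields a basis $\mathcal B'$ with $g\in \grp \mathcal B'$ via the Schauder decomposition. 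Injectivity: any relation in the limit is witnessed in a common refinement furnished by (1), hence already holds in $G$. Agreement of positive cones: if $0\leq g\in G$ and $g\in \grp \mathcal B'$, then Proposition \ref{proposition:basic}(2) gives $g\in G^+\cap \grp \mathcal B'=\sgr \mathcal B'$, so $g$ admits a positive representative in some $G_{\mathcal B'}$.

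The principal obstacle is the construction of the common refinement $\Sigma'$ in part (1): one must simultaneously refine two unrelated unimodular subdivisions of $\cube$ while keeping $P$ as a subcomplex, which is the heart of the classical unimodular refinement theorem for rational polyhedra. Once $\Sigma'$ is in hand, verifying that its vertex-hats lying in $P$ form a basis reduces to combining condition (ii) of Definition \ref{def:basis} with Corollary \ref{corollary:HolI}, and the remaining arguments for (2) and (3) amount to routine checks with Schauder decompositions and Proposition \ref{proposition:basic}.
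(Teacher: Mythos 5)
Your proposal is correct in substance, and for part (1) it takes a genuinely different route from the paper. The paper first invokes Theorem \ref{theorem:basis} to get $(G,u)\cong\McNn/\mathfrak j$ with $\mathfrak j$ principal, identifies this with $\McNn\restrict\mathcal Zf$ (so $(G,u)$ is archimedean), and then simply cites the abstract De Concini--Procesi lemma \cite[5.4]{marmun} for a joint refinement $\mathcal B'$ of $\mathcal B$ and $\mathcal F$, inspecting its proof (repeated splitting of a $2$-cluster $\{b,c\}$ into $b\wedge c,\ b-(b\wedge c),\ c-(b\wedge c)$) to see that refinement implies containment in $\sgr\mathcal B'$. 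You instead re-prove the joint refinement geometrically: realize $\maxspec$ as $P=\mathcal Z\mathfrak i=\bigcup_C T_C$, choose a unimodular triangulation over whose simplexes the lifts of the elements of $\mathcal F$ are linear, take its Schauder hats as $\mathcal B'$, and get $\mathcal B\cup\mathcal F\subseteq\sgr\mathcal B'$ from the decomposition $g=\sum_v \den(v)g(v)h_v$ --- which is exactly the computation the paper itself performs in its proof of part (3). Your route is more self-contained and exhibits the mechanism; the paper's is shorter because the hard combinatorial step is delegated to \cite[5.4]{marmun}. Your treatment of (2) and (3) coincides with the paper's (directedness from (1), Proposition \ref{proposition:refine-to-map} for the arrows, reduction of (3) to $G^{+}=\bigcup\sgr\mathcal B$ via $G=G^{+}-G^{+}$ and Proposition \ref{proposition:basic}(2)).

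Two corrections of attribution, neither fatal. First, Proposition \ref{proposition:triangP} as stated does not produce a triangulation that keeps $P$ a subcomplex \emph{and} makes the $\tilde f_j$ linear on simplexes; the statement you need is the one the paper takes from \cite[1.2]{mun88} in part (3). Note also that your ``principal obstacle'' is weaker than a common refinement of two unrelated unimodular subdivisions: the $b_i=\pi_i/\mathfrak i$ are globally linear, so you need not refine $\Sigma_{\mathcal B}$ (nor any triangulation attached to $\mathcal F$) --- it suffices that the finitely many functions $\tilde f_j$ be linear on the simplexes of some unimodular triangulation of $\mathcal Z\mathfrak i$. Second, condition (i) of Definition \ref{def:basis} for the hats (generation of the whole $\ell$-group) is not a consequence of the partition of unity; it is the substantive content of \cite[4.5]{marmun}, and it presupposes the identification $\McNn/\mathfrak i\cong\McNn\restrict\mathcal Z\mathfrak i$, which the paper secures from the principality of $\mathfrak i$ together with the variant of \cite[5.2]{gla}; your argument uses that identification implicitly whenever you evaluate elements of $G$ at points of $P$, so it should be stated up front.
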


\begin{proof}
(1) By Theorem \ref{theorem:basis}, $(G,u)$ is finitely presented, and
for some $n=1,2,\ldots$ we have \begin{equation}
\label{equation:finpres} (G,u)\cong\McNn/\mathfrak j,\,\,\,\mbox{ for
some principal ideal $\mathfrak j$ of }\McNn.
\end{equation}
Suppose
$\mathfrak j$ is generated by $f \in \McNn$.  Recalling the
notation $\mathcal Zf$ for the zeroset of $f$, a variant of
\cite[5.2]{gla} shows that $\McNn/\mathfrak
j\cong\McNn\restrict\mathcal Zf$.  A fortiori, $(G,u)$ is archimedean.
{}From the abstract De Concini-Procesi lemma \cite[5.4]{marmun} it
follows that $\mathcal B$ and $\mathcal F$ have a joint refinement
$\mathcal B'$.  Direct inspection of the proof therein, shows that
$\mathcal B'$ is obtained from $\mathcal B$ by finitely many
applications of the following operation: replace a 2-cluster $\{b,c\}$
of a basis $\mathcal A$, by the three elements $b\wedge c, b-(b\wedge
c), c-(b\wedge c).$ The result is a basis $\mathcal A'$ such that
$\mathcal A\subseteq \sgr \mathcal A'.$ Thus $\mathcal B\subseteq \sgr
\mathcal B'$.  The desired conclusion now follows from Proposition
\ref{proposition:refine-to-map}.

\smallskip
The proof of (2)   now immediately
follows from Proposition \ref{proposition:refine-to-map}.

\smallskip
Concerning (3),
in view of (\ref{equation:diagram}) it is sufficient to prove that
$G= \bigcup\{\grp \mathcal B\mid \mathcal B \mbox{ a basis of
}(G,u)\}$ and that $G^{+}= \bigcup\{\sgr \mathcal B\mid \mathcal B
\mbox{ a basis of }(G,u)\}.  $ Since $G=G^{+}-G^{+},$ only the latter
identity must be proved.  In other words, we must prove:
\begin{equation}
 \label{equation:final}
 \mbox{For every } p\in G^{+},\,\,(G,u)
 \mbox{ has a basis
    $\mathcal B$   such that
    } p\in \sgr \mathcal B.
    \end{equation}
As remarked above, we have a unital $\ell$-isomorphism $ \omega\colon
(G,u)\cong \McNn\restrict \mathcal Zf.  $ By \cite[4.5]{marmun},
$\omega$ induces a 1-1 correspondence between bases of the archimedean
unital $\ell$-group $(G,u)$ and Schauder bases $\mathcal H_{\Delta}$
of $\McNn\restrict \mathcal Zf$, where $\Delta$ ranges over unimodular
triangulations of the rational polyhedron $\mathcal Zf$.  Trivially, $
\mathcal B\subseteq \sgr \mathcal B'$ iff  $ \omega(\mathcal
B)\subseteq \omega(\mathcal B').  $ Thus (\ref{equation:final}) boils
down to proving that for every $0\leq g \in \McNn\restrict \mathcal
Zf$ there is a unimodular triangulation $\Delta$ of $\mathcal Zf$ such
that $g \in \sgr \mathcal H_{\Delta}.$ Let $\Delta$ be a unimodular
triangulation of $\mathcal Zf$ such that $g$ is linear over every
simplex of $\Delta$.
The existence of $\Delta$ is ensured by \cite[1.2]{mun88}.  Since
every linear piece of $g$ has integer coefficients, for each vertex
$v$ of $\Delta$ we can write $ g(v)={n_{v}}/{\den(v)}\,\,\, \mbox{ for
some } 0\leq n_{v}\in \mathbb Z.  $ As in the final
part of the proof of Theorem
\ref{theorem:basis}, let $h_{v}\colon |\Delta|\to\mathbb R$
denote the Schauder hat of $\Delta$
at $v$.  Let the function
$\overline{g}\in \sgr \mathcal
H_{\Delta}
\subseteq \McNn\restrict \mathcal Zf$
be defined by $$\overline{g}=\sum\{n_{v}h_{v}\mid v \mbox{ a vertex of
}\Delta\}.$$ Then $\overline{g}(v)=g(v)$ for each vertex $v$ of
$\Delta$ and $\overline{g}$ is linear over each simplex of $\Delta.$
It follows that
$\overline{g}=g$, which completes the proof.  \end{proof}


\end{document}